\newtheorem{thm}{Theorem}[section]
\newtheorem{cor}[thm]{Corollary}
\newtheorem{lem}[thm]{Lemma}
\newtheorem{prop}[thm]{Proposition}
\newtheorem{defn}[thm]{Definition}
\newtheorem{obs}{Observation}
\newtheorem{claim}[thm]{Claim}
\newcommand{\gt}{{\rm gt}}
\newcommand{\cc}{{\rm cor}}
\newcommand{\gpack}{{\rm gpack}}
\newcommand{\SM}{{\rm SM}}
\newcommand{\cp}{\,\square\,}
\begin{document}

	\title{The geodesic transversal problem on some networks}
	\date{}

	\author{
	Paul Manuel$^{a}$
	\and
	Bo\v{s}tjan Bre\v{s}ar$^{b,c}$
	\and
	Sandi Klav\v zar$^{b,c,d}$
}

	\maketitle

\begin{center}
	$^a$ Department of Information Science, College of Life Sciences, Kuwait University, Kuwait \\
	{\tt pauldmanuel@gmail.com}\\
	\medskip
	
	$^b$ Faculty of Natural Sciences and Mathematics, University of Maribor, Slovenia\\
	{\tt bostjan.bresar@um.si}\\
	\medskip

	$^c$ Institute of Mathematics, Physics and Mechanics, Ljubljana, Slovenia\\
	\medskip

	$^d$ Faculty of Mathematics and Physics, University of Ljubljana, Slovenia\\
	{\tt sandi.klavzar@fmf.uni-lj.si}\\
\end{center}

\begin{abstract}
A set $S$ of vertices of a graph $G$ is a geodesic transversal of $G$ if every maximal geodesic of $G$ contains at least one vertex of $S$. We determine a smallest geodesic transversal in certain interconnection networks such as meshes of trees, and some well-known chemical structures such as silicate networks and carbon nanosheets. Some useful general bounds for the corresponding graph invariant are obtained along the way.
\end{abstract}	

\par\textbf{Keywords:} geodesic transversal problem, geodesic packing problem,  interconnection networks, mesh of trees, silicate networks

\smallskip

\par\textbf{2010 Mathematics Subject Classification:} 05C12, 05C82.

\section{Introduction}
One of the fundamental concepts in graph theory is that of a shortest path between two vertices, also called  a {\em geodesic}. A geodesic is \textit{maximal} if it is not a subpath of another geodesic. 
A set $S$ of vertices of $G$ is a \textit{geodesic transversal} of $G$ if every maximal geodesic of $G$ contains at least one vertex of $S$. We also say that a vertex $x\in S$ {\em hits} geodesic $P$ if $x$ is a vertex of $P$. 
The \textit{geodesic transversal number} of $G$, denoted by $\gt(G)$, is the minimum cardinality of a geodesic transversal of $G$. A set $S$ of vertices is a {\em $\gt$-set} of $G$ if $S$ is a minimum cardinality geodesic transversal of $G$. The \textit{geodesic transversal problem} of $G$ is to find a $\gt$-set of $G$.

Interestingly, this concept was introduced recently by two independent research groups~\cite{MaBr21,PeSe21a,PeSe21b}. The geodesic transversal problem was shown to be NP-complete~\cite{MaBr21,PeSe21a} and polynomially solvable for trees~\cite{MaBr21,PeSe21a} and some cactus families~\cite{MaBr21}. Peterin and Semani\v{s}in~\cite{PeSe21b} have shown some interesting results on the geodesic transversal problem of the join and the lexicographic product of graphs. A motivation for studying the geodesic transversal problem was illustrated in~\cite{MaBr21,PeSe21a}, where applications in (large-scale) network theory and complex systems control were indicated. As an additional motivation, we present another connection of our study to a problem that arises in networks called the shortest path union cover problem as introduced by Boothe et al.~\cite{BoDv07}. If $G$ is a graph, then a set $S$ of vertices is called a {\em shortest path union cover} if the shortest paths that start at the vertices of $S$ cover all the edges of $G$. The \textit{shortest path union cover problem} is to find a shortest path union cover of minimum cardinality. Since all geodesics that start at the vertices of a geodesic transversal of $G$ cover all the edges of $G$, every geodesic transversal is a shortest path union cover. However, the converse is not true. Any leaf of a tree is a shortest path union cover and hence the shortest path union cover number of a tree is just $1$. On the other hand, the computation of the geodesic transversal number of a tree is not straightforward~\cite{MaBr21,PeSe21a}. 

The geodesic transversal problem is also of applicable nature, which is another motivation for our study. Notably, shortest paths are the basis of distance-based topological indices that are extensively studied and applied in chemical graph theory, let us mention just the notorious Wiener Index.  (The reader can start exploring this vast field by reading, for example, the following articles~\cite{cavaleri-2022, chen-2022, spiro-2022}.) Since, according to Bellman's principle, each sub-path of a shortest path will itself be a shortest path, maximal geodesics have a special distinguished role. To our knowledge, this has not been investigated in mathematical chemistry, but in our opinion the role of maximal geodesics would certainly be worth investigating there.

In this paper, we first present two simple, yet useful bounds, an upper and a lower bound for the geodesic transversal number of an arbitrary graph, and demonstrate their sharpness. They are given in the following section and are used in several proofs of later sections. In Section~\ref{sec:mesh}, we study meshes of trees, which are constructed by combining square grids with complete binary trees, and we determine their geodesic transversal number. In Section~\ref{sec:geo-pack-leaf-attach}, we consider $\gt$-sets with respect to two operations of attaching leaves to vertices of a graph. One of the operations is used in Section~\ref{sec:silicate}, where we solve the geodesic transversal problem for the well-known chemical structure of silicate networks. 

\section{Basic bounds}

The following upper bound for the geodesic transversal number follows from the fact that each (maximal) geodesic has two end-vertices and to hit all maximal geodesics, it suffices to hit all but one end-vertex of maximal geodesics of a graph.

\begin{obs}
\label{ob:endvertices}
If $G$ is a graph, and $X$ is the set of end-vertices of all maximal geodesics of $G$, then $\gt(G)\le |X|-1$. In addition, if there is a subset $X'\subset X$ such that no maximal geodesic lies between two vertices from $X'$, then $\gt(G)\le |X|-|X'|$. 
\end{obs}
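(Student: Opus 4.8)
The plan is to exhibit explicit geodesic transversals of the claimed sizes and to verify the transversal property directly from the definition of $X$. The single structural fact driving everything is that every maximal geodesic has two \emph{distinct} end-vertices, both of which belong to $X$ by the very definition of $X$. Once this is in place, both inequalities reduce to a pigeonhole argument on these two end-vertices.

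For the first inequality, I would fix an arbitrary vertex $x\in X$ and set $S=X\setminus\{x\}$, so that $|S|=|X|-1$. To check that $S$ is a geodesic transversal, let $P$ be any maximal geodesic and let $a,b$ be its two end-vertices. Since $a,b\in X$ and $a\ne b$, at most one of them equals $x$; hence at least one of $a,b$ lies in $S$, so $S$ hits $P$. As $P$ was arbitrary, $S$ is a geodesic transversal and $\gt(G)\le |X|-1$.

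For the second inequality I would take $S=X\setminus X'$, which has cardinality $|X|-|X'|$ because $X'\subset X$. Again let $P$ be an arbitrary maximal geodesic with end-vertices $a,b\in X$. If both $a$ and $b$ belonged to $X'$, then $P$ would be a maximal geodesic lying between two vertices of $X'$, contradicting the hypothesis; therefore at least one of $a,b$ lies in $X\setminus X'=S$, and so $S$ hits $P$. Hence $S$ is a geodesic transversal and $\gt(G)\le |X|-|X'|$. Note that the first statement is simply the special case $X'=\{x\}$ of the second.

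The reasoning is short, so there is no substantial obstacle; the only point that needs care is the degenerate possibility of a maximal geodesic that is a single vertex, in which case its two ``end-vertices'' would coincide and the counting would break. I would dispose of this at the outset by observing that in any graph possessing at least one edge a single vertex is never a maximal geodesic, so that every maximal geodesic genuinely has two distinct end-vertices in $X$ — precisely the fact on which both counting steps depend.
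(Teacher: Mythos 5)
Your proof is correct and takes essentially the same route as the paper, which argues (in the text following the observation) that it suffices to hit all but one end-vertex of the maximal geodesics, respectively that every maximal geodesic retains an end-vertex in $X-X'$; you merely spell out the pigeonhole step explicitly. One small caveat on your closing remark: the presence of an edge does not rule out trivial one-vertex maximal geodesics in a disconnected graph with an isolated vertex (there the bound $\gt(G)\le |X|-1$ can actually fail, e.g.\ for $K_1$), so the correct hypothesis for your degeneracy disposal is that $G$ has no isolated vertices --- immaterial in the connected setting the paper works in.
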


The second statement of Observation~\ref{ob:endvertices} follows from the fact that every maximal geodesic has an end-vertex in $X-X'$, thus vertices of $X-X'$ hit all maximal geodesics of $G$. 
The (first) bound in Observation~\ref{ob:endvertices} is sharp as can be seen by paths $P_n$, where $\gt(P_n)=1$, and complete graphs $K_n$, where $\gt(K_n)=n-1$. On the other hand, the bound need not be useful, as one can see in stars, since $\gt(K_{1,n})=1$, while the bound is $n-1$. 

To demonstrate the proof techniques by using also the bound in Observation~\ref{ob:endvertices}, we next present a result concerning a structure that arises in mathematical chemistry~\cite{XuLi17}. Fig.~\ref{fig:FTriCabNano} shows the $3$-layer and the $4$-layer triangular carbon nanosheet~\cite{HoMa19, XuLi17}, and the definition of the $r$-layer triangular carbon nanosheet should be clear from these two cases. Next, we establish the exact value of the geodesic transversal number of the $r$-layer triangular carbon nanosheet. Recall that a {\em leaf} or a {\em pendant vertex} in a graph $G$ is a vertex having one neighbor in $G$,  while a {\em support vertex} is a vertex adjacent to a leaf.
\begin{figure}[ht!]
	\begin{center}
		\scalebox{0.59}{\includegraphics{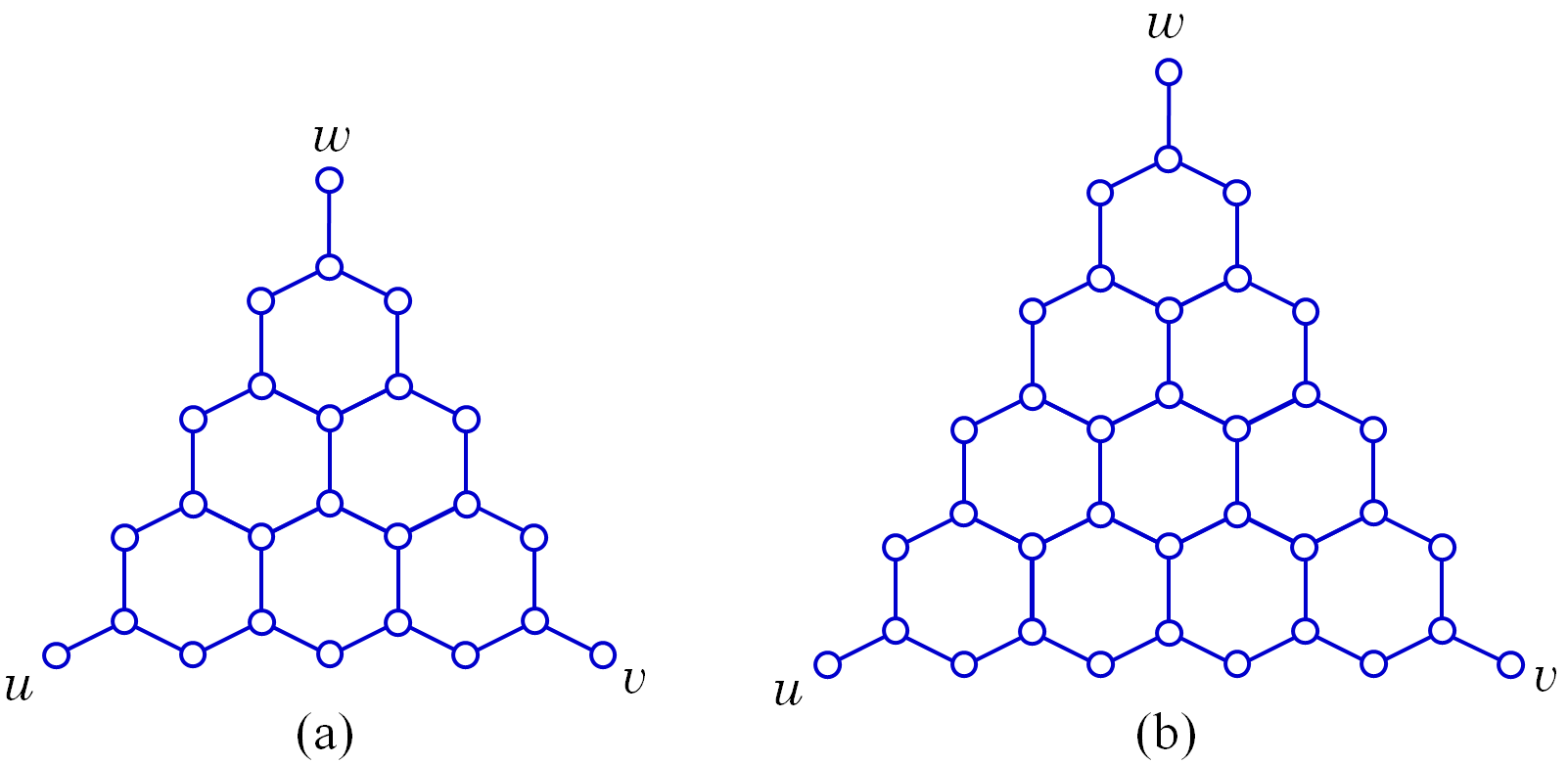}}
	\end{center}
	\caption{(a) A 3-layer triangular carbon nanosheet, \quad (b) A 4-layer triangular carbon nanosheet.}
	\label{fig:FTriCabNano}
\end{figure}

\begin{prop}
	\label{PgtTriCarNano}
	If $G$ is an $r$-layer triangular carbon nanosheet, then $\gt(G)=3$.
\end{prop}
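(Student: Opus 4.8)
The plan is to prove $\gt(G)\ge 3$ and $\gt(G)\le 3$ separately, exploiting the three-fold rotational symmetry of the nanosheet and its three corners. The common preliminary step is to understand where maximal geodesics can end: a geodesic can never be prolonged beyond a leaf, and, by the local honeycomb structure, a geodesic ending at a vertex that still admits a one-vertex shortest extension is not maximal. Iterating this extension argument forces the end-vertices of every maximal geodesic to lie in a comparatively small set $X$ of extremal vertices concentrated near the three corners. Determining $X$ exactly, and doing so uniformly in the number of layers $r$, is the technical core on which both bounds rest.

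For the upper bound I would feed this set $X$ into Observation~\ref{ob:endvertices}. The goal is to find a subset $X'\subset X$ with $|X|-|X'|=3$ such that no maximal geodesic joins two vertices of $X'$; the observation then gives $\gt(G)\le |X|-|X'|=3$, with the three vertices of $X\setminus X'$ forming the transversal. In practice I expect $X\setminus X'$ to consist of one extremal vertex associated with each corner, so that the verification reduces to checking that every maximal geodesic has at least one of its two end-vertices in this corner triple.

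For the lower bound it suffices to produce maximal geodesics that no two vertices can simultaneously hit; the cleanest route is to exhibit three pairwise vertex-disjoint maximal geodesics. Using the rotational symmetry, I would first describe a single maximal geodesic sitting in the neighborhood of one corner---a shortest path that can be extended at neither end---and then rotate it to obtain one such geodesic per corner, finally checking that the three are mutually disjoint. Any geodesic transversal must then contain a vertex of each, so $\gt(G)\ge 3$; combined with the upper bound this yields $\gt(G)=3$.

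The main obstacle throughout is the honeycomb bookkeeping. On the lower-bound side one must verify rigorously, and uniformly in $r$, both that the candidate corner geodesics are genuinely maximal (neither extendable at an end nor replaceable by a shorter detour) and that they are vertex-disjoint. On the upper-bound side the complementary difficulty is the claim that every maximal geodesic reaches the chosen corner triple, which requires a case analysis according to whether the end-vertices of a maximal geodesic lie on the same side or on two different sides of the triangle. Once the set $X$ and the extension behaviour of geodesics in the triangular honeycomb are pinned down, both inequalities should follow routinely.
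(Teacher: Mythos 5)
Your upper bound matches the paper's argument and is sound in outline: the extension argument pins down the end-vertex set $X$ as the three pendant corners together with all degree-$2$ vertices, one checks that no maximal geodesic joins two degree-$2$ vertices, and the second statement of Observation~\ref{ob:endvertices} with $X'$ the degree-$2$ vertices gives $\gt(G)\le |X|-|X'|=3$, the three corners forming the transversal. (One correction of detail: $X$ is not ``concentrated near the three corners''---it contains all degree-$2$ vertices along all three sides---but this does not affect the mechanism.)

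The lower bound, however, has a genuine gap: the route you commit to, namely exhibiting three pairwise vertex-disjoint maximal geodesics, one per corner, cannot succeed, because no such triple exists. By the very classification on which your upper bound rests, every maximal geodesic has a corner among its end-vertices (the two types are corner-to-corner, and corner to a degree-$2$ vertex of the opposite side). Since a corner-to-corner geodesic consumes two of the three corners, three pairwise disjoint maximal geodesics would all have to be of the second type, one from each corner. But two such ``cevian'' geodesics issued from distinct corners must share a vertex: in the plane embedding, the geodesic from $u$ to an interior vertex of the side $vw$ separates $v$ from the side $uw$, so any geodesic from $v$ to a vertex of $uw$ crosses it, and in a plane graph crossing paths share a vertex. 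Hence $\gpack(G)\le 2$, and the packing bound of Observation~\ref{obLowBound} can never certify $\gt(G)\ge 3$; this proposition is exactly a situation where $\gt(G)>\gpack(G)$. Your weaker opening formulation---produce a family of maximal geodesics that no two vertices can simultaneously hit---is the correct one, and it is in effect what the paper does: a direct case analysis over all placements of two candidate vertices, exhibiting in each case a maximal geodesic avoiding both, yields $\gt(G)>2$. Rotational symmetry can reduce the number of cases in that analysis, but disjointness must be abandoned.
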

\begin{proof}
	We denote by $u$, $v$ and $w$ the three corner vertices of $G$; these are the only pendant vertices of $G$, and let $Y=\{u,v,w\}$. Let $X'$ be the set of vertices of degree $2$.  Note that the set of end-vertices of maximal geodesics of $G$ is $X=Y\cup X'$. Indeed, if $x$ is an internal vertex of degree $3$, then $x$ cannot be an end vertex of a maximal geodesic, since such a geodesic can be extended through one of the three edges incident with $x$. Similarly, if $x$ is an external vertex of degree $3$ which lies on a geodesic $P$, then one of its two neighbors of degree $2$ can be added to $P$ so that the resulting path is still a geodesic. From these facts we readily infer that there are two types of maximal geodesics in $G$: (1) connecting two of the corner vertices, and (2) connecting a corner vertex with a vertex of degree $2$ that lies in the (opposite) geodesic between the other two corner vertices. (In addition, all the maximal geodesics are of diametral length.) 
		
	For the upper bound, $\gt(G)\le 3$, we use the second statement of Observation~\ref{ob:endvertices}. Indeed, by the previous paragraph, there are no maximal geodesics between two vertices in $X'$, hence $\gt(G)\le |X|-|X'|=|Y|=3$. In particular, the vertices of $Y$ form a geodesic transversal. By a simple case analysis (taking into consideration all possibilities of positioning two vertices in a potential geodesic transversal), one finds that $\gt(G)>2$.
\end{proof}

For the lower bound on the geodesic transversal number we introduce the following concept. A {\em geodesic packing} of $G$ is a set of maximal geodesics $\{P_1,\ldots, P_k\}$, which are mutually vertex-disjoint. The {\em geodesic packing number} of $G$ is the maximum cardinality of a geodesic packing of $G$ and is denoted by $\gpack(G)$. Clearly, every geodesic transversal $S$ of $G$ must hit each geodesic in a (maximum) geodesic packing $P$ in $G$, and pairwise distinct vertices from $S$ are needed to hit pairwise distinct geodesics from $P$. We infer the following result. 

\begin{obs}
	\label{obLowBound}
	Given a graph $G$, $\gt(G)\geq \gpack(G)$. 
\end{obs}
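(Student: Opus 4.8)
The plan is to turn the informal counting remark that precedes the statement into a clean pigeonhole argument. Fix a minimum geodesic transversal $S$ of $G$, so that $|S| = \gt(G)$, and fix a maximum geodesic packing $\mathcal{P} = \{P_1, \ldots, P_k\}$, so that $k = \gpack(G)$. The goal is to exhibit $k$ distinct vertices of $S$, which immediately yields $\gt(G) = |S| \geq k = \gpack(G)$.

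The key steps are as follows. First, since each $P_i$ is by definition a \emph{maximal} geodesic and $S$ is a geodesic transversal, the set $S$ must hit $P_i$; that is, $S \cap V(P_i) \neq \emptyset$, so I may choose a vertex $x_i \in S \cap V(P_i)$ for every $i \in \{1, \ldots, k\}$. Second, I invoke the defining property of a geodesic packing, namely that the geodesics $P_1, \ldots, P_k$ are pairwise vertex-disjoint. Consequently, whenever $i \neq j$ we have $V(P_i) \cap V(P_j) = \emptyset$, and since $x_i \in V(P_i)$ and $x_j \in V(P_j)$, it follows that $x_i \neq x_j$. Thus the chosen vertices $x_1, \ldots, x_k$ are pairwise distinct elements of $S$, whence $|S| \geq k$, which is exactly the claimed inequality.

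There is essentially no hard part here: the statement is a direct transversal-versus-packing duality, and the entire content lies in observing that vertex-disjointness of the packed geodesics forces the hitting vertices to be distinct. The only point worth flagging, to keep the argument airtight, is that the transversal property applies to precisely the geodesics appearing in a packing, because a packing consists of maximal geodesics and a geodesic transversal is required to hit every maximal geodesic; no additional case analysis or structural assumption on $G$ is needed, so the bound holds for arbitrary graphs.
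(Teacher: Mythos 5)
Your proof is correct and is precisely the formalization of the paper's own argument: the paper justifies the observation by noting that a geodesic transversal must hit every geodesic in a packing and that vertex-disjointness forces the hitting vertices to be pairwise distinct, which is exactly your pigeonhole selection of $x_1,\ldots,x_k$. No gaps and no difference in approach.
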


The bound in Observation~\ref{obLowBound} is again sharp in paths, where $\gt(P_n)=\gpack(P_n)=1$. It can fail to be useful, like in complete graphs $K_{n}$, where $\gpack(K_n)=\lfloor\frac{n}{2}\rfloor$, while $\gt(K_n)=n-1$. A large class of graphs $G$ in which $\gt(G)=\gpack(G)$ will be presented in Section~\ref{sec:geo-pack-leaf-attach}.

\section{Mesh of trees}
\label{sec:mesh}

In this section, we establish the geodesic transversal number of mesh of trees, which are constructed by combining 2-dimensional square grids with complete binary trees. For this purpose, we first consider the two classes that present building blocks of a mesh of trees. 

We start by presenting the class of square grids. First, the {\em Cartesian product} of graphs $G$ and $H$ is the graph with $V(G)\times V(H)$ as its vertex set, while $(g,h)(g',h')\in E(G\cp H)$ if and only if ($gg'\in E(G)$ and $h=h'$) or ($g=g'$ and $hh'\in E(H)$. See the monograph~\cite{HIK-11} for more on graph products. Now, a {\em square grid} is a graph $P_r\cp P_s$ for some $r,s\in \mathbb{N}$. In many situations, a graph may have an exponential number of maximal geodesics. In particular, this holds for square grids $P_r \cp P_r$. However, all maximal geodesics in $P_r \cp P_r$ can be hit by only two vertices. 

A {\em subdivided star} is the graph obtained from $K_{1,n}$, $n\ge 1$, by subdividing each of the edges of $K_{1,n}$ an arbitrary number of times (possibly zero, and different edges may be subdivided different number of times). As proved in~\cite[Proposition 3.2]{MaBr21}, $\gt(G)=1$ if and only if $G$ is a subdivided star.  

\begin{prop}
	\label{Tgt2DGrid}
	The geodesic transversal number of a $2$-dimensional square grid $P_r \cp P_s$ is $2$.
\end{prop}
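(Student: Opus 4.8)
The plan is to reduce everything to a clean description of the maximal geodesics of $G=P_r\cp P_s$. We may assume $r,s\ge 2$, since otherwise $G$ is a path and $\gt(G)=1$. Identifying $V(G)$ with $\{1,\dots,r\}\times\{1,\dots,s\}$, I claim that the maximal geodesics of $G$ are exactly the shortest $(1,1),(r,s)$-paths together with the shortest $(1,s),(r,1)$-paths; in particular every maximal geodesic joins two diagonally opposite corners and has length $\diam(G)=r+s-2$. Granting this, both bounds are immediate. For the upper bound the two corners $(1,1)$ and $(1,s)$ already form a geodesic transversal, since every geodesic of the first family contains $(1,1)$ and every geodesic of the second family contains $(1,s)$, whence $\gt(G)\le 2$. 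For the lower bound I would invoke the cited characterization that $\gt(H)=1$ if and only if $H$ is a subdivided star, and note that $G$ contains the $4$-cycle on $(1,1),(1,2),(2,2),(2,1)$ and so is not a subdivided star, giving $\gt(G)\ge 2$. (Observation~\ref{obLowBound} is of no help here, because a $(1,1),(r,s)$-geodesic and a $(1,s),(r,1)$-geodesic must cross, so $\gpack(G)=1$.)

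The heart of the argument, and the step I expect to cost the most, is the characterization of maximal geodesics, in particular its ``only if'' direction. I would start from the standard fact that a path in a grid is a geodesic precisely when it is monotone in each coordinate, i.e.\ each step strictly decreases the Manhattan distance $d((i,j),(i',j'))=|i-i'|+|j-j'|$ to the far endpoint. Fix a maximal geodesic $P$ with endpoints $a=(i_1,j_1)$ and $b=(i_2,j_2)$. The crucial observation is that $P$ is non-extendable at $a$ if and only if no neighbor $a'$ of $a$ satisfies $d(a',b)=d(a,b)+1$: such an $a'$ has distance to $b$ larger than every vertex of $P$, hence lies off $P$, and prepending it yields a strictly longer geodesic; conversely any neighbor closer to $b$ cannot prolong a shortest path to $b$.

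I would then run the short case analysis on the signs $\varepsilon_i=\mathrm{sign}(i_2-i_1)$ and $\varepsilon_j=\mathrm{sign}(j_2-j_1)$. If $\varepsilon_i=0$ then both horizontal neighbors of $a$ increase the distance to $b$, so non-extendability at $a$ would force $r=1$, contradicting $r\ge 2$; hence $\varepsilon_i\neq 0$, and likewise $\varepsilon_j\neq 0$. When $\varepsilon_i=+1$ the distance-increasing horizontal move is toward the smaller first coordinate, so non-extendability forces $i_1=1$, while $\varepsilon_i=-1$ forces $i_1=r$, and symmetrically for $j_1$; this pins $a$ down to one of the four corners. Applying the same reasoning at $b$ (with reversed signs) forces $b$ to be the corner diagonally opposite $a$, proving the ``only if'' direction. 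The ``if'' direction is routine: a shortest path between two opposite corners cannot be extended, because each such corner has both of its (existing, as $r,s\ge 2$) neighbors strictly closer to the opposite corner. Assembling these facts then yields $\gt(G)=2$.
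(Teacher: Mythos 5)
Your proof is correct and follows essentially the same route as the paper: both identify the corners as the only possible endpoints of maximal geodesics, observe that maximal geodesics run only between diagonally opposite corners, take two corners in a common row as the transversal, and obtain the lower bound from the subdivided-star characterization of graphs with $\gt=1$. The only difference is presentational: where the paper ``readily infers'' the endpoint claim from the standard projection characterization of geodesics in Cartesian products and then invokes Observation~\ref{ob:endvertices}, you prove the extendability criterion from scratch via monotonicity of the Manhattan distance, which makes your version more self-contained but not a different argument.
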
 
\begin{proof}
Recall a well known fact that in any Cartesian product a path $P$ is a geodesic if and only if the projections of $P$ are geodesics in each of the factors (where repetitions of vertices in the projected paths are disregarded). We readily infer that only the corner vertices (that is, the vertices of degree $2$) are end-vertices of maximal geodesics in $P_r \cp P_s$. (By Observation~\ref{ob:endvertices}, this implies $\gt(P_r \cp P_s)\le 3$.) Moreover, the geodesic between any two corner vertices in the same row (respectively, same column) is not maximal, since it can be extended to a diametral path. Hence by using the second statement of Observation~\ref{ob:endvertices} we derive $\gt(P_r \cp P_s)\le 2$, and the corner vertices in the same row (or column, respectively) form a geodesic transversal. Since the grid is not a subdivided star, we infer $\gt(P_r \cp P_s)\ge 2$ by~\cite[Proposition 3.2]{MaBr21}.
\end{proof}

\begin{figure}[ht!]
	\begin{center}
		\scalebox{0.5}{\includegraphics{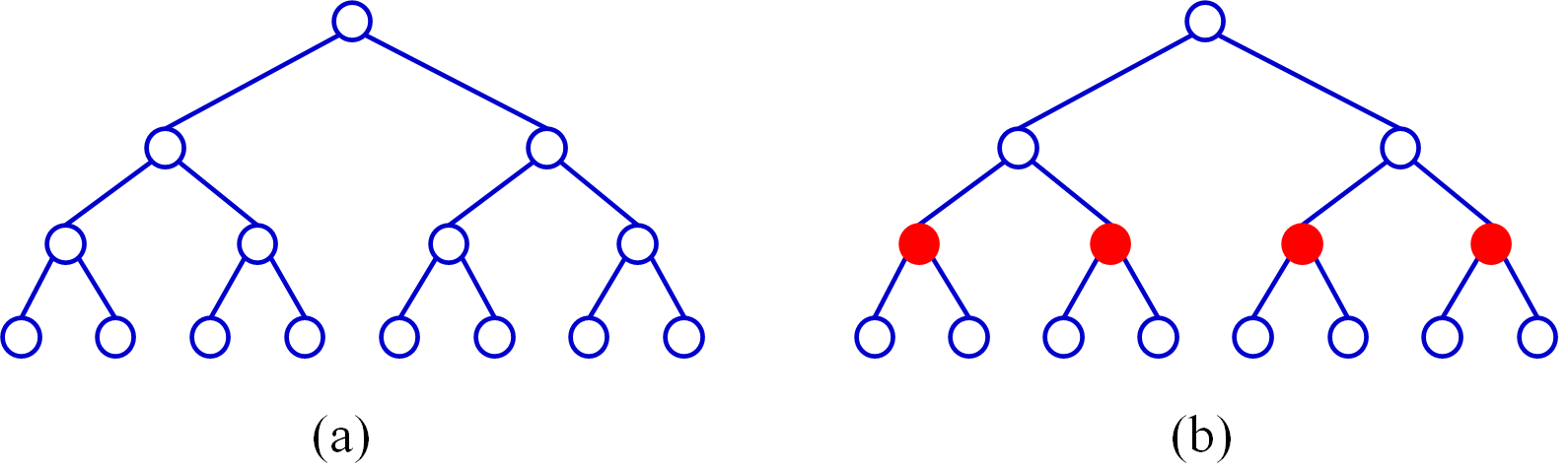}}
	\end{center}
	\caption{(a) A complete binary tree \quad \quad \quad (b) The red bullets form a geodesic transversal}
	\label{fig:FCompBinTrees}
\end{figure}

The {\em complete binary tree} of {\em dimension $r$} is the rooted tree in which all ($2^r$) leaves have the same depth $r$, and all interior vertices have $2$ children. (Fig.~\ref{fig:FCompBinTrees}(a) presents the complete binary tree of dimension $3$.)

\begin{prop}
	\label{TgtCBT}
If $G$ is an $r$-dimensional complete binary tree, then $\gt(G) = 2^{r-1}$.
\end{prop}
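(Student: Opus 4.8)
The plan is to reduce everything to the structure of maximal geodesics, which in a tree is particularly transparent. First I would observe that since $G$ is a tree, every pair of vertices is joined by a unique path, which is automatically a geodesic, and such a geodesic is maximal precisely when it cannot be prolonged at either end, i.e.\ when both of its end-vertices are leaves. In an $r$-dimensional complete binary tree the only leaves are the $2^r$ vertices of depth $r$: the root has degree $2$ and every other internal vertex has degree $3$, so a geodesic ending at any of these can be extended along an unused incident edge and hence is not maximal. Thus the maximal geodesics of $G$ are exactly the leaf-to-leaf paths, each of which ascends from one depth-$r$ leaf to the lowest common ancestor of the two leaves and then descends to the other.

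For the upper bound I would exhibit an explicit transversal rather than appeal to Observation~\ref{ob:endvertices}, which here yields only the weak estimate $\gt(G)\le |X|-1=2^r-1$ (and its refinement gives nothing better, since every two leaves are joined by a maximal geodesic, so the admissible subset $X'$ contains at most one vertex). Let $D$ be the set of the $2^{r-1}$ vertices of depth $r-1$, that is, the parents of the leaves. Any leaf-to-leaf path begins at a leaf whose unique incident edge leads to its parent, a vertex of $D$; hence every maximal geodesic meets $D$. Therefore $D$ is a geodesic transversal and $\gt(G)\le 2^{r-1}$.

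For the matching lower bound I would use the packing bound of Observation~\ref{obLowBound}. For each vertex $p\in D$, its two leaf-children together with $p$ span a path of length $2$, which is a maximal geodesic since both of its ends are leaves. Distinct vertices of $D$ lie at the same depth and have disjoint sets of children, and leaves and depth-$(r-1)$ vertices occur at different depths, so these $2^{r-1}$ paths are pairwise vertex-disjoint and form a geodesic packing. Consequently $\gpack(G)\ge 2^{r-1}$, and combining this with $\gt(G)\ge\gpack(G)$ gives $\gt(G)\ge 2^{r-1}$. Together with the upper bound we conclude $\gt(G)=2^{r-1}$.

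The argument is clean because the two bounds meet, so there is no genuine gap to close; the only real care needed is in the first step, namely the correct identification of the maximal geodesics and the observation that the generic end-vertex bound of Observation~\ref{ob:endvertices} overcounts (it sees all $2^r$ leaves) and must be replaced by the direct construction at depth $r-1$. The main obstacle, then, is not a hard estimate but making sure both the transversal $D$ and the sibling-pair packing are described so that the two sides visibly coincide. I would also note the base case $r=1$, where $G=P_3$ and both the transversal $D$ and the packing consist of a single object, confirming that the formula $\gt(G)=2^{r-1}$ holds from the start.
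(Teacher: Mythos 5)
Your proposal is correct and follows essentially the same route as the paper: both establish the lower bound via the $2^{r-1}$ pairwise disjoint length-$2$ sibling-pair geodesics combined with Observation~\ref{obLowBound}, and the upper bound by showing the $2^{r-1}$ support vertices (your set $D$ of depth-$(r-1)$ vertices) hit every leaf-to-leaf maximal geodesic. Your extra remarks on the weakness of Observation~\ref{ob:endvertices} and the base case $r=1$ are accurate but do not change the argument.
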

\begin{proof}
Consider the set of the maximal geodesics of length $2$ in $G$ which connect two leaves through their common support vertex.
There are $2^{r-1}$ such maximal geodesics, and they are pairwise disjoint, thus $\gpack(G)\ge 2^{r-1}$. By Observation~\ref{obLowBound}, $\gt(G) \geq 2^{r-1}$.   
Note that the end-vertices of any maximal geodesic in $G$ are leaves, since (maximal) geodesics in a tree are all its (maximal) paths. Hence we infer that the set $S$ of support vertices of $G$ (see Fig.~\ref{fig:FCompBinTrees}(b)) is a geodesic transversal. Thus, $\gt(G) \leq |S| = 2^{r-1}$.
\end{proof}
\begin{figure}[ht!]
	\begin{center}
		\scalebox{0.5}{\includegraphics{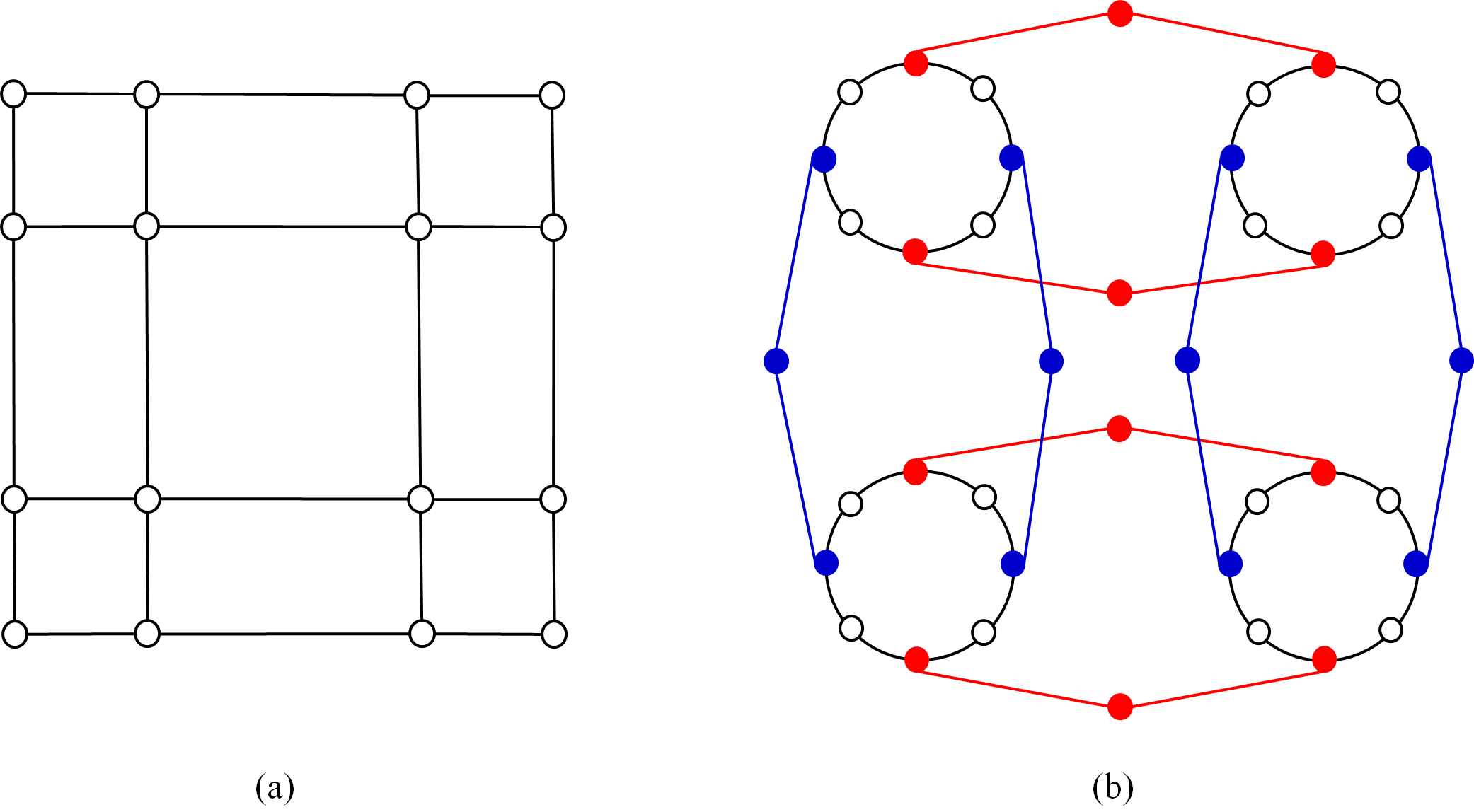}}
	\end{center}
	\caption{(a) $2^2\times 2^2$ square grid. 
		(b) An $N\times N$ mesh of trees where $N=2^2$. 
				The grid vertices are white, the row tree vertices red, and the column tree vertices are in blue.}
	\label{MeshOfTrees4x4}
\end{figure}

A mesh of trees is a well-known interconnection parallel architecture~\cite{Leig91,Xu02}. The advantage of the mesh of trees is that it combines the features of both $2$-dimensional square grids as well as complete binary trees. The mesh of trees was originally introduced by Leighton~\cite{Leig91}. It is now formally defined here. Let $MoT(r)$ denote the $N\times N$ mesh of trees where $N= 2^r$ and $r$ is a positive integer. The graph $MoT(r)$ is constructed on top of the $2$-dimensional square grid, where each row and column are replaced by a complete binary tree (and the original edges of the grid are deleted). The vertices of the $N\times N$ grid are marked in white and play the role of leaves of the corresponding complete binary trees both in rows and columns. The row trees are marked in red and the column trees are marked in blue (with the exception of their leaves, which are colored white, as mentioned earlier). See Fig.~\ref{MeshOfTrees4x4}. Let us now identify some geodesic transversal of $MoT(r)$. 
\begin{lem}
	\label{LMoT}
	Let $MoT(r)$ be the $N\times N$ mesh of trees, where $N =2^r$. For each maximal geodesic $P$ of $MoT(r)$ there exists an $8$-cycle $C$ such that $P$ contains at least one blue vertex and one red vertex of $C$. 
\end{lem}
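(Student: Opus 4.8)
The plan is to reduce everything to a single structural fact: \emph{a maximal geodesic of $MoT(r)$ must use an edge of some row tree together with an edge of some column tree.} Call an edge \emph{red} if it lies in a row tree and \emph{blue} if it lies in a column tree. Since every white (grid) vertex has exactly one neighbor in its row tree and one in its column tree, while every internal red (respectively blue) vertex is incident only to red (respectively blue) edges, the color of the edges traversed along a path can change only at a white vertex, and a vertex that is incident to both a red and a blue edge is necessarily white. These two remarks will do most of the work.

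First I would establish the structural fact by contraposition. Suppose a geodesic $Q$ uses only red edges (the blue‑only case is symmetric). Starting from any vertex of $Q$ and moving along red edges, one stays inside a single row tree $T_0$: from a red internal vertex all edges remain in $T_0$, and from a white leaf $(i,j)$ the unique red edge leads to the row‑$i$ parent, again inside $T_0$. Next, $T_0$ is isometrically embedded, i.e. the $MoT(r)$‑distance of two vertices of $T_0$ equals their distance in $T_0$, because any path between them that leaves $T_0$ must re‑enter it through a white leaf and can only be longer than the path staying inside. Hence $Q$ is always extendable to a strictly longer geodesic: at an internal endpoint one appends a tree‑neighbor lying away from the other endpoint (raising the $T_0$‑distance, hence the $MoT(r)$‑distance, by one), and at a white‑leaf endpoint $(i,j)$ one appends the blue parent $c$ of $(i,j)$, for which $d(u,c)=d(u,(i,j))+1$ for the other endpoint $u\in T_0$. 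In either case $Q$ is not maximal.

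Granting the structural fact, a maximal geodesic $P$ carries both a red and a blue edge, so by the color‑change remark it has an \emph{internal} vertex $t=(i_0,j_0)$ incident on $P$ to both a red and a blue edge; this $t$ is white, and its two $P$‑neighbors are its parent $\hat{r}$ in the row tree of row $i_0$ and its parent $b$ in the column tree of column $j_0$. Both $\hat{r}$ and $b$ are lowest‑level support vertices (parents of grid leaves), both lie on $P$, with $\hat{r}$ red and $b$ blue.

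It then remains only to produce the $8$-cycle. Let $i_0'$ and $j_0'$ be the siblings of $i_0$ and $j_0$ in the binary tree, and consider the four grid vertices on rows $\{i_0,i_0'\}$ and columns $\{j_0,j_0'\}$. Joining them cyclically through the two row‑supports (covering the column pair $\{j_0,j_0'\}$ in rows $i_0$ and $i_0'$) and the two column‑supports (covering the row pair $\{i_0,i_0'\}$ in columns $j_0$ and $j_0'$) yields an $8$-cycle $C$; verifying this is a routine check of the eight adjacencies, and $\hat{r}$ and $b$ are exactly the row‑$i_0$ support and the column‑$j_0$ support occurring on $C$. Thus $P$ contains the red vertex $\hat{r}\in C$ and the blue vertex $b\in C$, as required. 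I expect the only delicate point to be the isometric‑embedding claim used to rule out shortcuts that temporarily leave a tree; once that is in place, the corner at any turn of $P$ furnishes the desired $8$-cycle essentially for free.
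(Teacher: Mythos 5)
Your proposal is correct, but it takes a genuinely different route from the paper's. The paper first argues that end-vertices of maximal geodesics must lie on $8$-cycles, and then runs a case analysis on the number of $8$-cycles that $P$ intersects (one; exactly two, joined through a single row or column tree; more than two), locating a red and a blue vertex of some cycle separately in each case. You instead prove one structural fact --- a maximal geodesic must use both a row-tree (red) edge and a column-tree (blue) edge, since a geodesic confined to a single tree is always extendable, which rests on each tree being isometrically embedded --- and then extract a ``corner'': a white vertex $t$ of $P$ where the edge colors switch, whose two $P$-neighbors are necessarily its red row-support $\hat r$ and its blue column-support $b$, both of which lie on the unique $8$-cycle through $t$ (the one determined by the sibling row pair and sibling column pair of $t$). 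This buys you two things: the case analysis disappears entirely (in particular the somewhat delicate counting in the paper's Case II, where one argues $P$ must reach four vertices deep into the second cycle), and you obtain a slightly stronger conclusion, namely that the red and blue witnesses can be taken adjacent to a common white vertex of $P$. The cost is that the non-extendability step must be justified honestly: the isometry of each row/column tree in $MoT(r)$, and the increment $d(u,c)=d(u,(i,j))+1$ when appending the blue parent at a white-leaf endpoint. Both claims are true --- any excursion out of a row tree must exit and re-enter through white leaves via column trees and costs at least four extra edges, and the only short approach to $c$ from within row $i$'s tree is through its child $(i,j)$ --- and you correctly flag them as the delicate points; note that the paper's counterpart (that no internal tree vertex ends a maximal geodesic, by analogy with Proposition~\ref{TgtCBT}) is likewise only sketched, so the two proofs simply invest their care in different places: the paper in classifying the global shape of $P$, you in the local metric geometry of a single tree.
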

\begin{proof}
Similarly as in the proof of Proposition~\ref{TgtCBT} we note that no internal vertex of a complete binary tree (either blue or red) is an end-vertex of a maximal geodesic of $MoT(r)$. Therefore, end-vertices of any maximal geodesic of $MoT(r)$ must lie in $8$-cycles of $MoT(r)$.  Given a maximal geodesic $P$, we distinguish the following three cases. 
 
\begin{itemize}
	\item[] Case I - $P$ lies entirely inside an $8$-cycle $C$. Since $P$ is maximal, it must be of length 4. Thus, $P$ contains at least one blue vertex and one red vertex of $C$.  
		\item[] Case II - $P$ intersects exactly two $8$-cycles $C_1$ and $C_2$. In other words, $P$ starts in $C_1$, passes through vertices of a complete binary tree and ends in $C_2$. It is clear that both cycles $C_1$ and $C_2$ are in a single row or in a single column (see Fig.~\ref{MeshOfTreesTypeII}, where (a) and (b) represent the cases, where two cycles are in the same row and column, respectively). Without loss of generality, suppose that $P$ connects the cycles $C_1$ and $C_2$ via vertices of a red complete binary tree (case (a) in the figure). If no blue vertex in $C_1$ is contained in $P$, then at most two vertices in $C_1$ are contained in $P$ (a red vertex and possibly a white vertex). However, since $P$ is a maximal geodesic, this implies that at least four vertices from $C_2$ are contained in $P$, one of which is blue.  In either case, a blue and a red vertex of some $8$-cycle are contained in $P$. (For the case (b) in the figure one uses symmetric arguments.)	
	\begin{figure}[ht!]
	\begin{center}
		\scalebox{0.43}{\includegraphics{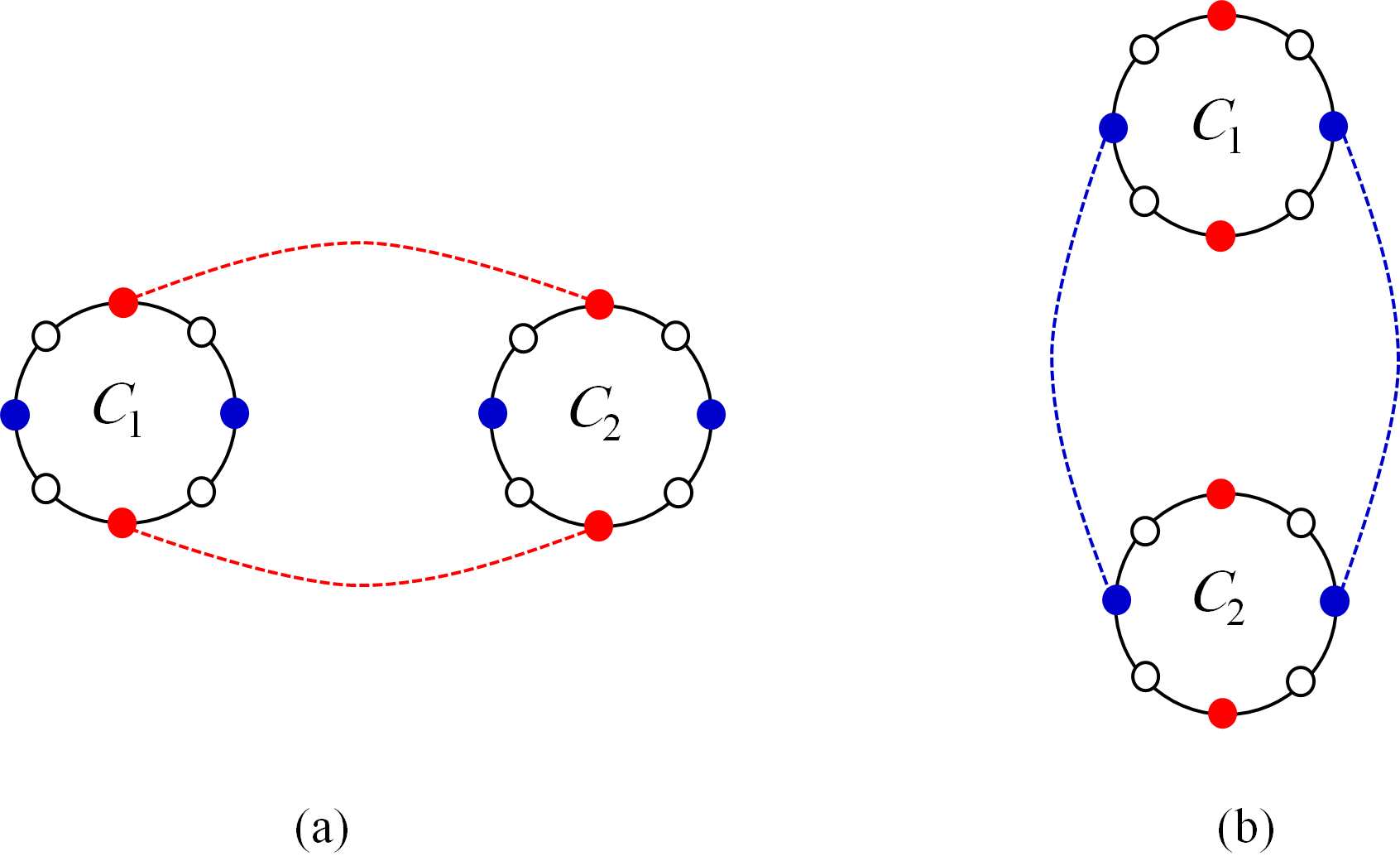}}
	\end{center}
	\caption{Case II of Lemma \ref{LMoT}.}
	\label{MeshOfTreesTypeII}
\end{figure}
	\item[] Case III - $P$  intersects more than two $8$-cycles. In this case, the cycles $C_1$ and $C_2$, which contain end-vertices of $P$, are not in the same column nor in the same row. Indeed, if $C_1$ and $C_2$ are in the same row (respectively column), then all maximal geodesics that go from a vertex in $C_1$ to a vertex in $C_2$ pass through red (respectively, blue) vertices of the corresponding complete binary tree that connects $C_1$ with $C_2$. Consequently, we are in Case II, where only two $8$-cycles are involved in $P$.
	Therefore, there exists an $8$-cycle $C$, different from $C_1$ and $C_2$, such that $P$  enters into $C$ through a red (respectively, blue) vertex of $C$ and exits $C$ in a blue (respectively, red) vertex of $C$. 
\end{itemize}  
This concludes the proof.
\end{proof}
\begin{thm}
\label{TMoT}
	If $MoT(r)$ is an $N\times N$ mesh of trees, where $N =2^r$, then $\gt(G) = 2^{2r-1}$.
\end{thm}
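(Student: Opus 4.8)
The plan is to prove the two matching bounds $\gt(MoT(r))\le 2^{2r-1}$ and $\gt(MoT(r))\ge 2^{2r-1}$ separately, with the upper bound resting on Lemma~\ref{LMoT} together with a vertex count, and the lower bound on a disjointness argument over the $8$-cycles.

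For the upper bound I would take $S$ to be the set of all red support vertices of $MoT(r)$, i.e.\ the red vertices that lie on some $8$-cycle. Each row tree is a complete binary tree with $2^r$ leaves and hence has $2^{r-1}$ support vertices, and there are $2^r$ rows, so $|S|=2^r\cdot 2^{r-1}=2^{2r-1}$. Every red support vertex lies on exactly one $8$-cycle, and conversely the red vertices appearing on $8$-cycles are precisely the red supports. By Lemma~\ref{LMoT}, every maximal geodesic $P$ contains a red vertex of some $8$-cycle, and such a vertex belongs to $S$; hence $S$ is a geodesic transversal and $\gt(MoT(r))\le 2^{2r-1}$.

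For the lower bound, the key observation is that the $2^{2r-2}$ elementary $8$-cycles---those arising from a $2\times2$ grid block whose two columns are siblings in the row trees and whose two rows are siblings in the column trees---are pairwise vertex-disjoint. Indeed, each white (grid) vertex, each red support, and each blue support lies on exactly one such cycle, so these cycles partition the relevant vertices (in particular $4\cdot 2^{2r-2}=2^{2r}$ accounts for all white vertices). Within a single such $8$-cycle $C$ one finds four maximal geodesics of length $4$, namely the two routes joining each of the two antipodal pairs of white vertices of $C$. I would first verify that each of these length-$4$ paths is genuinely a maximal geodesic of $MoT(r)$ by checking that neither white endpoint admits an extension: the candidate extension through the perpendicular tree fails because that tree supplies a strictly shorter detour, exactly as in Case~I of the proof of Lemma~\ref{LMoT}. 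A short incidence check then shows that no single vertex of $C$ lies on all four geodesics (each white vertex of $C$ lies on three of them, each support vertex on only two), so any geodesic transversal must contain at least two vertices of $C$. Since the elementary $8$-cycles are disjoint and each of these four geodesics is contained entirely in its own cycle, the contributions are independent, giving $\gt(MoT(r))\ge 2\cdot 2^{2r-2}=2^{2r-1}$.

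The main obstacle I anticipate lies in the lower bound. The naive route via Observation~\ref{obLowBound} is insufficient: a geodesic packing can contain at most one length-$4$ geodesic per $8$-cycle, yielding only $\gpack(MoT(r))\ge 2^{2r-2}$, a factor of two short of the truth. The essential refinement is therefore to exploit that each $8$-cycle forces \emph{two} transversal vertices rather than one, which requires both the pairwise disjointness of the elementary $8$-cycles and the incidence analysis of the four internal maximal geodesics. Care is needed in confirming the maximality of these four geodesics in $MoT(r)$ (and not merely within $C$), and in arguing that a transversal cannot economize by reusing a shared vertex across cycles---a point secured precisely by the disjointness of the elementary $8$-cycles.
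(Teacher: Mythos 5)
Your proof is correct and takes essentially the same route as the paper's: the upper bound is the set of red vertices on $8$-cycles combined with Lemma~\ref{LMoT}, and the lower bound uses the $2^{r-1}\cdot 2^{r-1}$ pairwise vertex-disjoint $8$-cycles, each forcing two transversal vertices via its four internal maximal geodesics between antipodal white vertices. Your explicit incidence count (each white vertex lies on three of the four geodesics, each support vertex on two, so no single vertex hits all four) simply spells out what the paper's proof asserts with ``clearly''.
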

\begin{proof}
 As noted in the proof of Lemma~\ref{LMoT}, end-vertices of any maximal geodesic in the graph lie in $8$-cycles. There are $2^{r-1}\cdot 2^{r-1}$ such vertex-disjoint $8$-cycles in $G$. 
 Each $8$-cycle $C$ contains four maximal geodesics between white vertices, two connecting each pair of antipodal white vertices.  Clearly, to hit these four maximal geodesics of $C$, one needs at least two vertices in $C$.  Since the $8$-cycles are vertex-disjoint, we infer $\gt(G)\geq 2\cdot(2^{r-1}\cdot 2^{r-1})$ = $2^{2r-1}$.
		
Let $R$ denote the set of all red vertices of all $8$-cycles in $MoT(r)$. By Lemma \ref{LMoT}, $R$ is a geodesic transversal of $MoT(r)$. There are $2^r$ row (red) complete binary trees in $MoT(r)$, and each of the binary trees contains $2^{r-1}$ red vertices in $8$-cycles. Hence, $|R|=2^{r}\cdot 2^{r-1}=2^{2r-1}$, and we infer $\gt(G) \leq 2^{2r-1}$.
\end{proof}

\section{Geodesic transversal and leaf attachment operations}
\label{sec:geo-pack-leaf-attach}

In this section, we consider two operations of attaching leaves to vertices of a graph, which behave nicely with respect to geodesic packing. In particular, the second operation will be applied in the next section for determining the geodesic transversal number of silicate networks.

The first operation, known as {\em corona} of the graph, is defined as follows.
Given a graph $G$, let $\cc(G)$ be the graph with $V(\cc(G))=V(G)\cup\{v':\, v\in V(G)\}$ and $E(\cc(G))=E(G)\cup\{vv':\, v\in V(G)\}$. In other words, to each vertex $v$ of $G$ we attach a leaf $v'$ and get the corona, $\cc(G)$, of $G$. 
Note that in the literature one can also find the notation $G\circ K_1$ for the corona of $G$. 

Using the notation of West~\cite{west-2001}, $\alpha'(G)$ stands for the {\em matching number} of a graph $G$, that is, the size of a largest matching in $G$. Also,  $\beta(G)$ is the standard notation for the {\em vertex cover number} of $G$, which is the smallest set of vertices that hit all edges of $G$.  

\begin{prop}
	\label{prp:corona}
	If $G$ is a graph, then $\gpack(\cc(G))=\alpha'(G)$ and $\gt(\cc(G))=\beta(G)$.
\end{prop}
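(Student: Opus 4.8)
The plan is to first pin down the structure of the maximal geodesics of $\cc(G)$, and then read off both equalities from elementary bijective and covering arguments. Throughout I will assume $\delta(G)\ge 1$ (no isolated vertices), which is the setting in which the identities hold: an isolated vertex of $G$ produces a $K_2$-component of $\cc(G)$ whose unique maximal geodesic is counted by neither $\alpha'$ nor $\beta$, so the stated formulas genuinely fail in that degenerate case.

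\textbf{Foundational step.} I would begin by establishing a structural lemma: a path $P$ is a maximal geodesic of $\cc(G)$ if and only if its endpoints are two distinct leaves $u',v'$ and $P=u',u,\dots,v,v'$, where $u,\dots,v$ is a $u$--$v$ geodesic of $G$. For one direction, since $u'$ and $v'$ have degree $1$ such a path cannot be prolonged and is therefore maximal, and it is a geodesic because the two pendant edges add exactly one to the relevant distances. For the converse, if a geodesic ends at an original vertex $x\in V(G)$, then appending the pendant edge $xx'$ still yields a geodesic (every path to the leaf $x'$ passes through $x$), contradicting maximality; hence both endpoints must be leaves. The consequence I will use repeatedly is that every maximal geodesic of $\cc(G)$ contains at least one edge of $G$, namely the first edge of its internal $G$-geodesic (which exists since $u\ne v$).

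\textbf{Packing equals matching.} For $\gpack(\cc(G))\ge\alpha'(G)$ I would take a maximum matching $M$ of $G$ and, for each edge $uv\in M$, form the length-$3$ maximal geodesic $u',u,v,v'$; these are pairwise vertex-disjoint precisely because $M$ is a matching, giving a packing of size $\alpha'(G)$. For the reverse inequality, given any geodesic packing $\{P_1,\dots,P_k\}$, I pick from each $P_i$ the first edge $e_i$ of its internal $G$-geodesic (available by the foundational step). Since the $P_i$ are vertex-disjoint, the edges $e_i$ share no vertices, so they form a matching of $G$ of size $k$, whence $k\le\alpha'(G)$.

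\textbf{Transversal equals vertex cover.} For $\gt(\cc(G))\le\beta(G)$ I would verify that a minimum vertex cover $C$ of $G$, viewed as a subset of $V(\cc(G))$, is a geodesic transversal: every maximal geodesic contains some edge $uu_1$ of $G$, and $C$ meets $\{u,u_1\}$, so $C$ hits that geodesic. For the lower bound I would start from a minimum geodesic transversal $S$ and project it to $C=\{v\in V(G):\ v\in S\text{ or }v'\in S\}$; clearly $|C|\le|S|$, and $C$ covers $G$ because for every edge $uv\in E(G)$ the maximal geodesic $u',u,v,v'$ must be hit by $S$, and each of the four possibilities places $u$ or $v$ into $C$. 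Combining the two directions gives $\gt(\cc(G))=\beta(G)$. The routine inequalities fall out immediately once the structural lemma is in place; the step demanding the most care is this projection $S\mapsto C$ in the lower bound, where I must argue both that replacing pendant leaves by their supports does not increase the cardinality and that the resulting set still covers every edge of $G$.
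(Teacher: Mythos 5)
Your proposal is correct and follows essentially the same route as the paper: the length-$3$ geodesics $(u',u,v,v')$ over a maximum matching for the packing lower bound, disjoint internal $G$-edges of disjoint geodesics for the upper bound, a vertex cover as a transversal, and the projection of a transversal onto $V(G)$ (which the paper runs contrapositively as ``it is easy to see'') for $\gt(\cc(G))\ge\beta(G)$. Your explicit structural lemma and your caveat that $G$ must have no isolated vertices (the paper leaves both implicit, and the identities indeed fail with isolated vertices, since a $K_2$-component of $\cc(G)$ contributes to neither $\alpha'(G)$ nor $\beta(G)$) are welcome refinements rather than deviations.
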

\proof
Let $M$ be a maximum matching in $G$. Then the collection of paths $${\cal P}=\{(x',x,y,y'):\, xy\in M, \textrm{ and } x' \textrm{ (}y') \textrm{ is the leaf neighbor of } x \textrm{ (} y, \textrm{ respectively)}\}$$  
is a geodesic packing of $\cc(G)$. This implies $\gpack(\cc(G))\ge \alpha'(G)$. For the reverse inequality, note that the end-vertices of every maximal geodesic in $\cc(G)$ are leaves that are attached to distinct vertices of $G$. Now, if $P$ and $Q$ are two disjoint maximal geodesics in $\cc(G)$, then they pass two distinct edges of $G$, which have no end-vertex in common. Hence, the size of a geodesic packing of $\cc(G)$ is not larger than the size of a maximum matching in $G$, which proves the first equality of the proposition.

For the second equality note that the set of vertices forming a vertex cover in $G$ is a geodesic transversal in $\cc(G)$. Hence, $\gt(\cc(G))\le \beta(G)$. If a set $S\subset V(\cc(G))$ is smaller than $\beta(G)$, then it is easy to see that there exists an edge $xy\in E(G)$, such that $\{x,y,x',y'\}\cap S=\emptyset$, where $x'$ ($y'$) is the leaf attached to $x$ ($y$, respectively). Since $(x',x,y,y')$ is a maximal geodesic in $\cc(G)$, we infer that $S$ is not a geodesic transversal in $\cc(G)$. Thus, $\gt(\cc(G))\ge \beta(G)$, and the proof is complete. \qed

\medskip

By the famous K\"{o}nig-Egervary theorem, which states that $\alpha'(G)=\beta(G)$ in any bipartite graph $G$, we get the following consequence of Proposition~\ref{prp:corona}. 

\begin{cor}\label{c:Konig}
	If $G$ is a bipartite graph, then $\gpack(\cc(G))=\gt(\cc(G))$. 
\end{cor}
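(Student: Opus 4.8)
The plan is to combine Proposition~\ref{prp:corona} with the K\"{o}nig--Egervary theorem in the most direct way possible. Proposition~\ref{prp:corona} already establishes the two identities $\gpack(\cc(G))=\alpha'(G)$ and $\gt(\cc(G))=\beta(G)$ for an \emph{arbitrary} graph $G$. Since $G$ is assumed bipartite, I would invoke the K\"{o}nig--Egervary theorem, which asserts that $\alpha'(G)=\beta(G)$ for every bipartite graph $G$. Chaining these three equalities together immediately yields
\[
\gpack(\cc(G)) \;=\; \alpha'(G) \;=\; \beta(G) \;=\; \gt(\cc(G)),
\]
which is exactly the claimed conclusion.

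Concretely, the argument is a single substitution: I would first write down the two equalities from Proposition~\ref{prp:corona} as established facts (no reproof is needed, since the proposition is proved above for all graphs and carries over verbatim to the bipartite case). Next I would state the K\"{o}nig--Egervary theorem as the bridge that forces $\alpha'(G)=\beta(G)$ precisely because $G$ is bipartite. Finally I would concatenate the equalities into the displayed chain above and read off $\gpack(\cc(G))=\gt(\cc(G))$.

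There is essentially no technical obstacle here, as the corollary is a formal consequence of a theorem proved earlier in the same section and a classical named theorem; the only thing to be careful about is that the hypothesis of bipartiteness is what licenses the middle equality, so I would make sure to explicitly flag where that assumption enters. It is worth noting that the equality $\gt(\cc(G))=\gpack(\cc(G))$ exhibits $\cc(G)$ as a graph attaining the lower bound of Observation~\ref{obLowBound}, thereby delivering on the promise in Section~2 of a large class of graphs for which the geodesic packing number and the geodesic transversal number coincide; I would mention this only if a concluding remark is desired, since it is not needed for the proof itself.
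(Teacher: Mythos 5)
Your proposal is correct and matches the paper exactly: the corollary is derived there, without a separate proof, by combining the two equalities of Proposition~\ref{prp:corona} with the K\"{o}nig--Egervary theorem $\alpha'(G)=\beta(G)$ for bipartite $G$. Your careful flagging of where bipartiteness enters is a nice touch but the argument is the same.
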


\medskip

The second operation of this section is defined as follows. Given a graph $G$, let $M$ be a (not necessarily maximum) matching in $G$, $V(M)$ the set of  end-vertices of edges from $M$, and let $N$ = $V(G)-V(M)$. See Fig.~\ref{fig:FExample1}.
Let $G_M$ be the graph obtained from $G$ by attaching a single leaf to each vertex in $V(M)$, and attaching two leaves to each vertex of $N$. In other words, $V(G_M)$ = $V(G) \cup \{x':\, x\in V(M)\}\cup \{u',u'':\, u\in N\}$, while $E(G_M) = E(G) \cup$ $\{xx':\, x\in V(M)\}\cup \{uu', uu'':\, u \in N\}$.

\begin{figure}[ht!]
	\begin{center}
		\scalebox{0.30}{\includegraphics{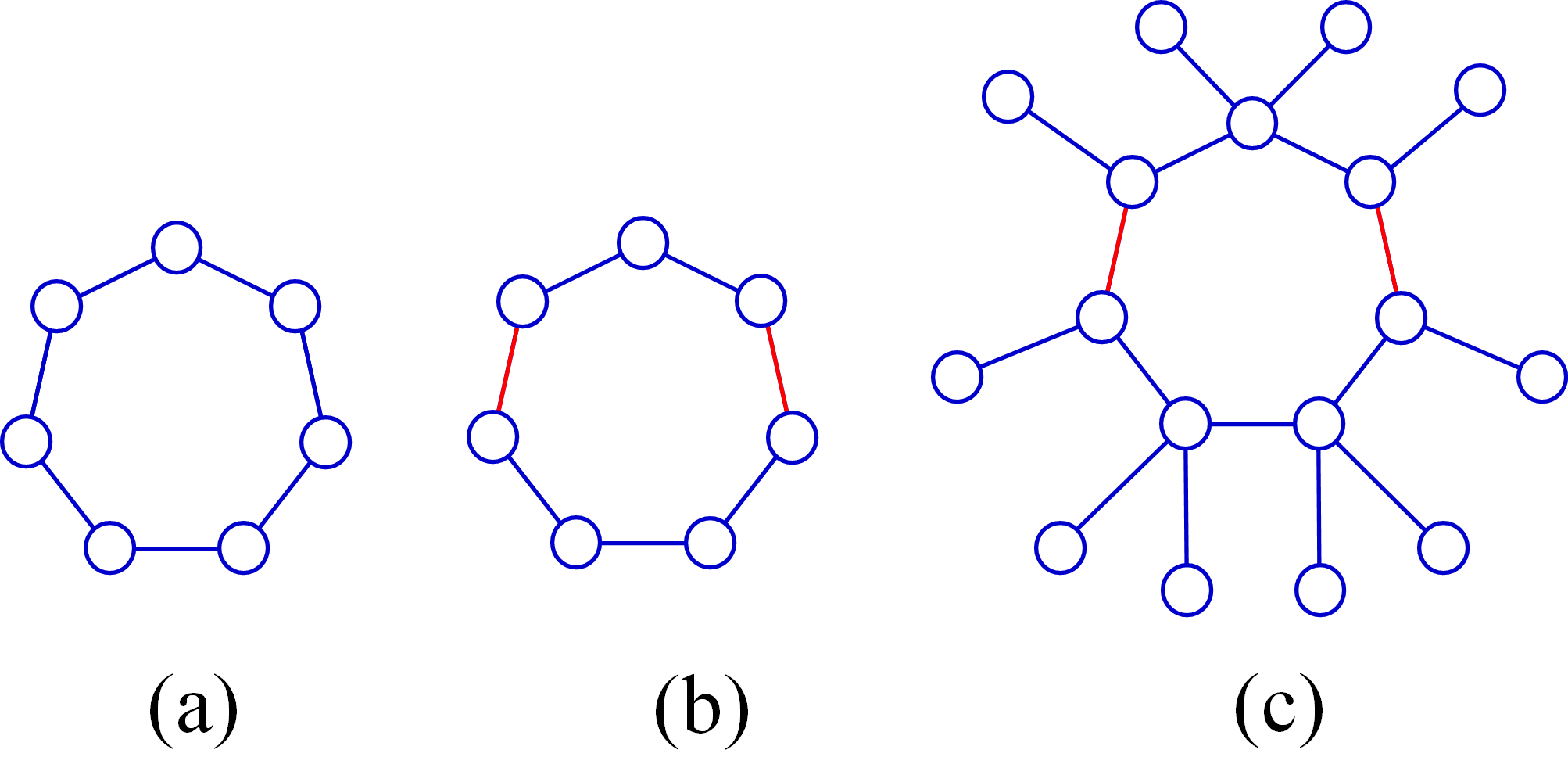}}
	\end{center}
	\caption{(a) An example graph $G$. (b) The red edges form a matching $M$.  (c) Graph $G_M$: to each vertex incident with an edge of $M$ a leaf is attached, to other vertices two leaves are attached.}
	\label{fig:FExample1}
\end{figure}

\begin{prop}
	\label{prp:gpackGM}
	If $G$ is a graph, and $M$ a matching in $G$, then $\gpack(G_M)=|V(G)|-|M|$. If, in addition, there exists $M'\subset V(M)$ such that each edge in $M$ has exactly one end-vertex in $M'$, and $M'$ is independent in $G$, then $\gt(G_M)=|V(G)|-|M|$.
\end{prop}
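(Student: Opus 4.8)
The plan is to treat the two equalities separately, deriving each from matching upper and lower bounds. In both cases the starting point is the observation that every \emph{maximal} geodesic of $G_M$ has both of its end-vertices among the attached leaves. Indeed, every vertex of $G$ carries at least one leaf in $G_M$, so a geodesic terminating at a vertex $v\in V(G)$ can always be prolonged (to an unused leaf of $v$, or failing that into $G$, since $v$ then has a $G$-neighbour) and is therefore not maximal; and since a leaf has degree $1$, it can occur on a geodesic only as an end-vertex. Conversely, a geodesic joining two leaves cannot be extended at either end, hence is maximal. Thus the maximal geodesics of $G_M$ are exactly the leaf-to-leaf geodesics, and the $G$-portion $Q$ of such a geodesic (obtained by deleting the two leaf end-vertices) is a geodesic of $G$, because a degree-one vertex never shortens a distance between two vertices of $G$.

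For the first equality I would prove $\gpack(G_M)=|V(G)|-|M|$ by two bounds. For the lower bound, take the packing consisting of $(x',x,y,y')$ for each $xy\in M$ together with $(u',u,u'')$ for each $u\in N$; these are maximal geodesics by the remark above, and since $V(M)$, $N$ and all attached leaves are pairwise disjoint, the paths are vertex-disjoint, giving $\gpack(G_M)\ge |M|+|N|=|V(G)|-|M|$. For the upper bound I would count leaves: each maximal geodesic uses exactly two leaves (its two end-vertices), and in a packing these leaf-pairs are disjoint, so the number of geodesics is at most half the number of leaves. As $G_M$ has $2|M|$ leaves attached to $V(M)$ and $2|N|$ attached to $N$, this yields $\gpack(G_M)\le |M|+|N|=|V(G)|-|M|$.

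For the second equality, the lower bound $\gt(G_M)\ge \gpack(G_M)=|V(G)|-|M|$ is immediate from Observation~\ref{obLowBound} and the first part, so the content lies in the upper bound. The transversal I would use is $S=M''\cup N$, where $M''=V(M)\setminus M'$ is the set of end-vertices of the edges of $M$ that are \emph{not} in $M'$; since $M''$ contains exactly one end-vertex of each edge of $M$, we have $|S|=|M|+|N|=|V(G)|-|M|$. To verify that $S$ is a transversal, take a maximal geodesic $P$ with $G$-portion $Q$. If $Q$ meets $N$ (in particular if $P=(u',u,u'')$ for some $u\in N$), then $P$ is hit by $N\subseteq S$. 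Otherwise all vertices of $Q$ lie in $V(M)$, and $Q$ is nontrivial, since its two ends are distinct vertices of $V(M)$ (each carrying only one leaf). As $M'$ is independent in $G$, the set $M''$ is a vertex cover of $G[V(M)]$, so the first edge of $Q$ has an end-vertex in $M''$, whence $P$ is hit by $M''\subseteq S$.

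The step I expect to be the crux is choosing the correct transversal and seeing how the independence hypothesis enters. It is tempting, but wrong, to take $M'\cup N$: if $xy$ is an edge of $G[V(M)]$ with $x,y\notin M'$, the maximal geodesic $(x',x,y,y')$ would be missed. The correct move is to use the \emph{complementary} representatives $M''$, and the key elementary fact is that $M'$ is independent in $G$ if and only if $M''$ is a vertex cover of $G[V(M)]$; combined with the observation that a vertex cover of $G[V(M)]$ meets every nontrivial geodesic contained in $V(M)$ through its first edge, this closes the argument. The remaining points—that the $G$-portion of a maximal geodesic is genuinely a geodesic of $G$, and that the cases on $Q$ are exhaustive—are routine.
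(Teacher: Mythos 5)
Your proof is correct and follows essentially the same route as the paper's: the identical packing $\{(x',x,y,y') : xy\in M\}\cup\{(u',u,u'') : u\in N\}$ with the same leaf-counting upper bound (each maximal geodesic ends in two of the $2|M|+2|N|$ leaves), and the same transversal $M''\cup N$ for the second equality. Your reformulation of the independence hypothesis as ``$M''$ is a vertex cover of $G[V(M)]$'' is merely a repackaging of the paper's direct verification, which checks that if the support neighbor $v_1$ of an end-leaf lies in $M'$, then the next vertex $v_2$ on the geodesic is a non-leaf outside $M'$ and hence lies in $M''\cup N$.
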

\proof
First, we present maximal geodesics that partition $V(G_M)$. There are two types of these maximal geodesics. First, for any $xy\in M$, let $P_{xy}=(x',x,y,y')$ be the path from the leaf neighbor of $x$ to the leaf neighbor of $y$. Second, for any $u\in N$, let $P_u=(u',u,u'')$ be the path from one leaf neighbor of $u$ to the other leaf neighbor of $u$. Clearly, $P_{xy}$ and $P_u$ are maximal geodesics for every $xy\in M$ and every $u\in N$. In addition, ${\cal P}=\{P_{xy}:\, xy\in M\}\cup \{P_u:\,u\in N\}$ is a collection of vertex disjoint maximal geodesics such that each vertex of $G_M$ belongs to exactly one of these geodesics. Clearly, $|{\cal P}|=|M|+|N|$, which implies $\gpack(G_M)\ge |M|+|N|$. 
On the other hand, note that any geodesic between any pair of distinct leaves is a maximal geodesic of $G_M$. Since there are $2|M|+2|N|$ leaves in $G_M$, we get $\gpack(G_M)\le |M|+|N|$. Combining both inequalities, we infer $$\gpack(G_M)=|M|+|N|.$$ Since $2|M|+|N|=|V(G)|$, we get $\gpack(G_M)=|V(G)|-|M|$. 

By Observation~\ref{obLowBound}, we immediately get $\gt(G_M)\ge |M|+|N|$. To prove the second statement in the proposition we present a geodesic transversal of $G_M$ with $|M|+|N|$ vertices. By the assumption of the statement, there exists $M'\subset V(M)$ such that each edge in $M$ has exactly one end-vertex in $M'$, and $M'$ is independent in $G$. Let $M''=V(M)\setminus M'$, and we claim that $N\cup M''$ gives the desired result. 
\begin{claim}
	\label{cla:claim1}
	$N\cup M''$ is a geodesic transversal of $G_M$.
\end{claim}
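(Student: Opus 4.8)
The plan is to first pin down exactly what the maximal geodesics of $G_M$ look like, and then to verify that each of them meets $N\cup M''$. Since $G_M$ is obtained from $G$ by appending pendant vertices, I would begin by arguing that both end-vertices of any maximal geodesic must be leaves: if a geodesic $P$ ended at a vertex $v\in V(G)$, then $v$ carries at least one attached leaf $v^\ast\notin V(P)$, and prolonging $P$ by the edge $vv^\ast$ yields a strictly longer geodesic, contradicting maximality. A leaf has degree $1$, so it can only be an end-vertex of $P$, never an interior one; hence every interior vertex of $P$ lies in $V(G)$, and the interior of $P$ is a geodesic of $G$.

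This gives two families of maximal geodesics, which I would treat separately. \emph{Type (A):} both leaf end-vertices are attached to the same vertex; this forces that vertex to carry two leaves, i.e.\ to lie in $N$, so the geodesic is $P_u=(u',u,u'')$ for some $u\in N$. Such a geodesic is immediately hit, since $u\in N\subseteq N\cup M''$. \emph{Type (B):} the two leaf end-vertices are attached to \emph{distinct} vertices $a,b\in V(G)$; then $P=(a^\ast,a,\dots,b,b^\ast)$, where $(a,\dots,b)$ is a geodesic of $G$ with $a\ne b$. All the effort goes into showing that this interior geodesic contains a vertex of $N\cup M''$.

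For Type (B) I would argue by contradiction, and here is the step I expect to be the crux. Suppose no interior vertex of $P$ belongs to $N\cup M''$. Since $V(G)=N\cup M'\cup M''$ is a disjoint decomposition (using that $M'$ and $M''=V(M)\setminus M'$ partition $V(M)$), every interior vertex would then lie in $M'$. But consecutive vertices of the interior geodesic are adjacent in $G$, and as $a\ne b$ this geodesic has at least one edge; that edge would have both end-vertices in $M'$, contradicting the hypothesis that $M'$ is independent in $G$. Hence some interior vertex lies in $N\cup M''$, and $P$ is hit. The whole argument hinges on pairing the independence of $M'$ against the unavoidable adjacency along a geodesic, so the main obstacle is really the bookkeeping of the first paragraph—correctly establishing that the maximal geodesics are exactly the leaf-to-leaf paths with a $G$-geodesic interior—after which the independence of $M'$ closes Type (B) in a single line.

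Finally, I would note that this Claim is what drives the second equality of the proposition: one has $|N\cup M''|=|N|+|M|$, since $M''$ contains exactly one end-vertex of each edge of $M$ and hence $|M''|=|M|$, and this equals $\gpack(G_M)=|V(G)|-|M|$. Together with the lower bound $\gt(G_M)\ge\gpack(G_M)$ already obtained from Observation~\ref{obLowBound}, the Claim yields $\gt(G_M)=|V(G)|-|M|$.
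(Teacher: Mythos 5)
Your proposal is correct and takes essentially the same route as the paper: both arguments rest on the two facts that every maximal geodesic of $G_M$ runs between leaves and that, by the independence of $M'$, the interior of such a geodesic cannot lie entirely in $M'$ (the paper just scans the first two interior vertices $v_1,v_2$ from one end, while you derive a global contradiction from an all-$M'$ interior --- a cosmetic difference). One pedantic remark: your claim that an end-vertex $v\in V(G)$ of a maximal geodesic always carries an attached leaf $v^\ast\notin V(P)$ fails in the single degenerate case $P=(v',v)$ with $v\in V(M)$ and $v'$ its unique leaf; that path is instead non-maximal because it extends through the $M$-partner of $v$, a one-line patch that closes the edge case.
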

\proof (of Claim) Let $P$ be a maximal geodesic in $G_M$, and let $v$ be one of its end-vertices. As noted earlier in this proof, $v$ is a leaf. If $v$ is adjacent to a vertex in $M''\cup N$, then we are done. Otherwise, the neighbor $v_1$ of $v$ is an end-vertex of an edge in $M$, and $v_1\in M'$. Since $v_1$ is adjacent to only one leaf, we infer that the neighbor $v_2$ of $v_1$ that lies on $P$ is not a leaf. Since $M'$ is independent, we infer that $v_2\in M''\cup N$. Hence $P$ is hit by $v_2$, which proves the claim. $(\blacksquare)$

\medskip

By Claim \ref{cla:claim1}, we infer $\gt(G_M)\le |M|+|N|$, implying $\gt(G_M)=|M|+|N|=|V(G)|-|M|$. \qed

\section{Hexagonal silicate sheets}
\label{sec:silicate}
An $SiO_4$ tetrahedron consists of one silicon atom and 4 oxygen atoms, where the silicon atom is bonded to the 4 oxygen atoms. See Fig.~\ref{fig:FHexSil1}. A silicate sheet is a $2$-dimensional array of $SiO_4$ tetrahedra~\cite{CiBa11}. Two tetrahedra of a silicate sheet are linked together through a shared oxygen atom. In a silicate sheet, every silicon atom has exactly 4 oxygen bonds and every oxygen atom has at most 2 silicon bonds. A silicate sheet is formed by linking each tetrahedron to at most three other tetrahedra. In Fig.~\ref{fig:FHexSil1}, the oxygen nodes are in red color and the silicon nodes are in light blue color.  Chemists~\cite{KaKa06, TeSa16} do not consider the “bond between two oxygen nodes” as an edge because the bond is weak. An edge of a silicate sheet is the bond between a silicon node and an oxygen node. The bonds between two oxygen nodes are ignored and are not considered as edges. 

\begin{figure}[ht!]
	\begin{center}
		\scalebox{0.43}{\includegraphics{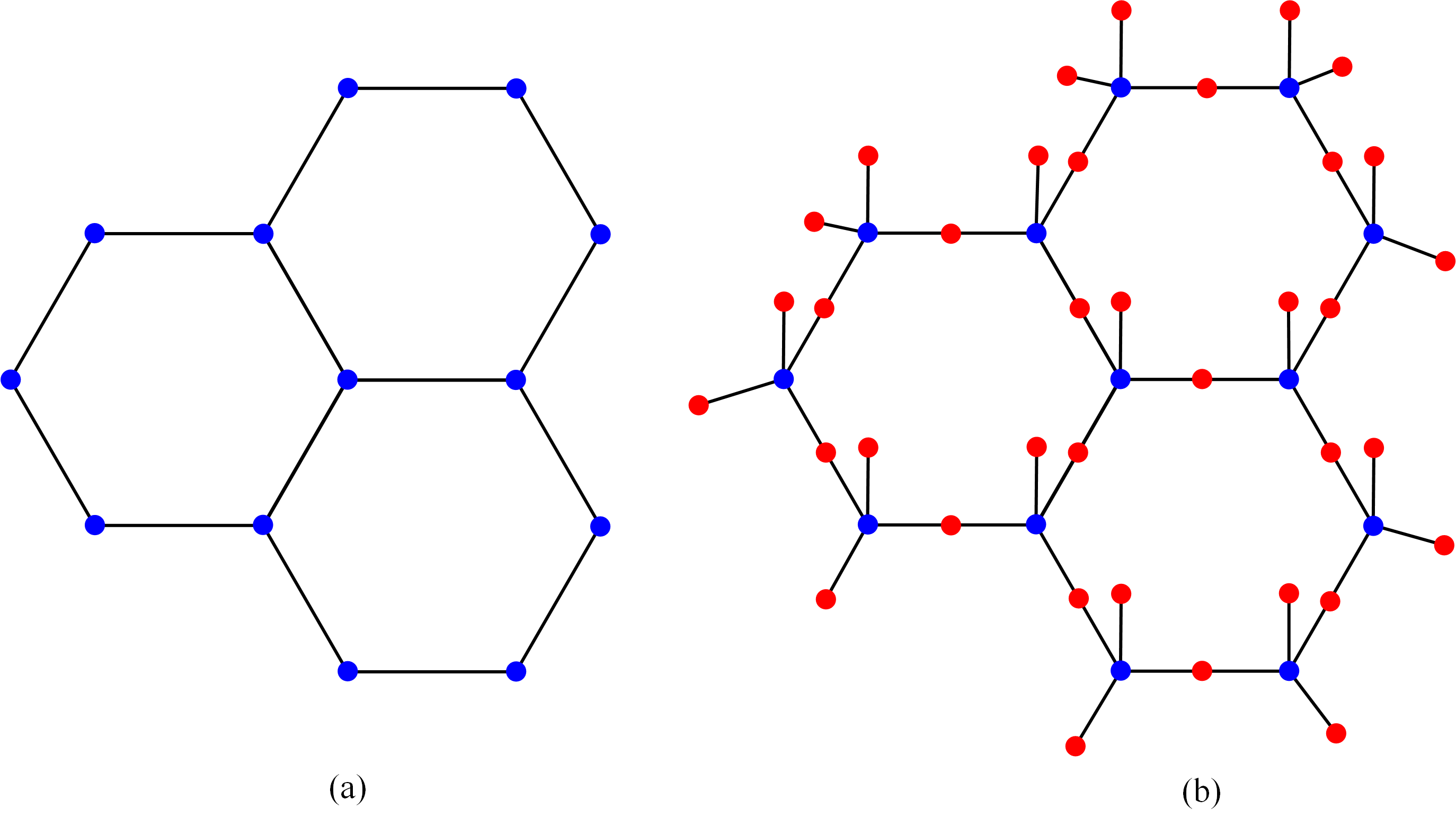}}
	\end{center}
	\caption{(a) Hexagonal carbon nanosheet (b) Silicate sheet derived from hexagonal carbon nanosheet. Red vertices are oxygen atoms and blue vertices are silicon atoms.}
	\label{fig:FHexSil1}
\end{figure}

 An $r$-layer hexagonal carbon nanosheet was defined and studied by Deng et al.~\cite{DeCh14}. Three examples of hexagonal carbon nanosheets are displayed in Fig.~\ref{fig:F123LayHexCarbon}. Formally, a hexagonal carbon nanosheet consists of the central hexagon and $r-1$ layers of hexagons around it. We mention that Deng et al.~\cite{DeCh14} call the hexagonal carbon nanosheet as {\em zigzag-graphene (6-Z-HGNS)}, while Xiaoxiao and Zhang~\cite{XiZh17} call it a {\em hexagonal model with alternate B- and N-terminated edges}.  
\begin{figure}[ht!]
	\begin{center}
		\scalebox{0.6}{\includegraphics{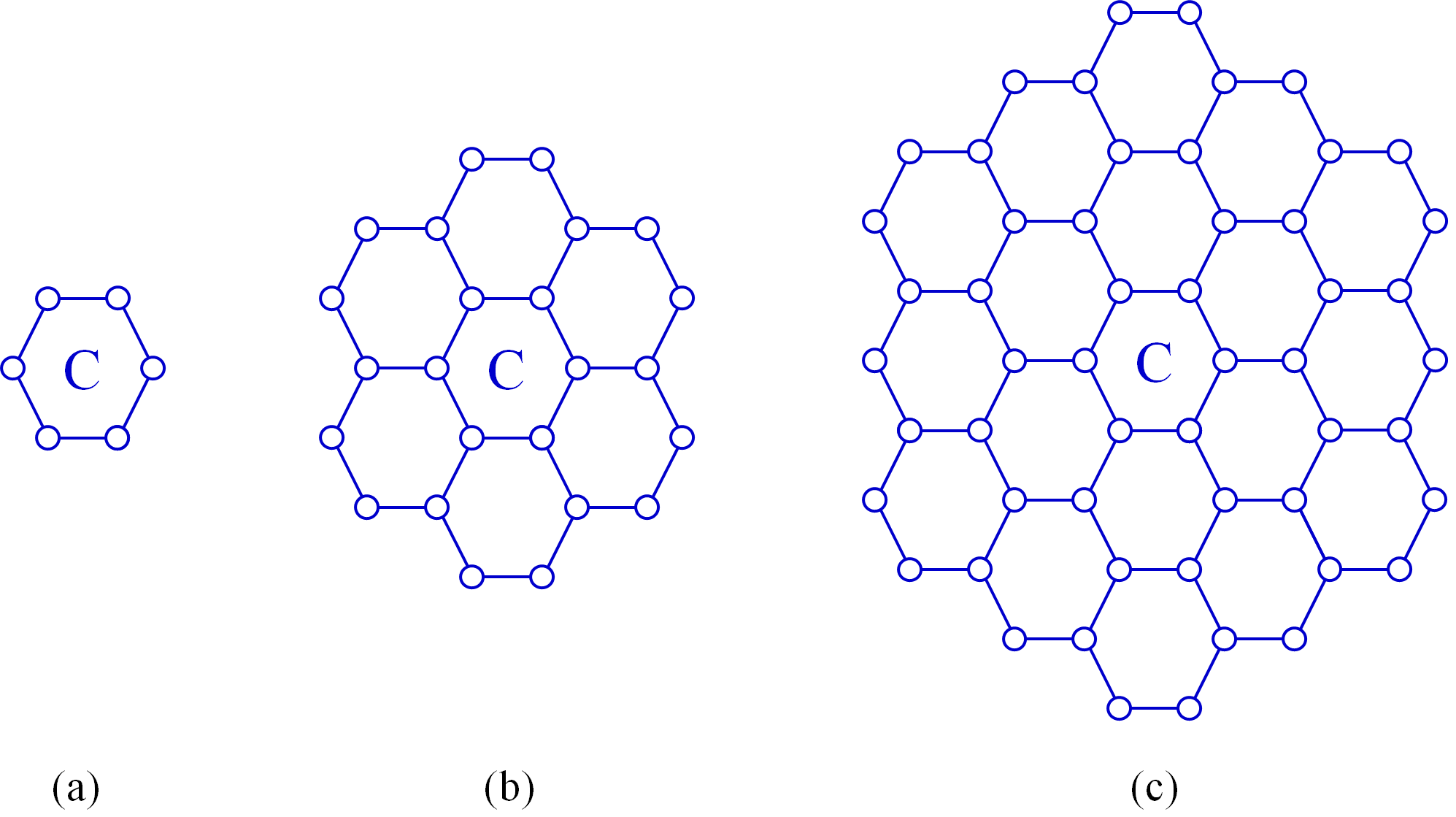}}
	\end{center}
	\caption{(a) 1-layer hexagonal carbon nanosheet, denoted by $C$. \quad (b) $2$-layer hexagonal carbon nanosheet. \quad (c) $3$-layer hexagonal carbon nanosheet.}
	\label{fig:F123LayHexCarbon}
\end{figure}

Next, we present the smoothing operation, which is used in the definition of the main concept of this section. Given a vertex $v$ of degree $2$ in $G$, let $x$ and $y$ be the neighbors of $v$ in $G$. {\em Smoothing a vertex} $v$  is the operation of removing the vertex $v$ and adding edge $xy$ to the graph $G$~\cite{MaBr21}.   

\begin{defn}
	Let $H$ denote a silicate sheet (Fig.~\ref{fig:FHexSil1}), and let $\SM(H)$ be the graph obtained from $H$ by smoothing all $2$-degree vertices of $H$. If the graph obtained from $\SM(H)$ by deleting all the pendant vertices of $\SM(H)$ is an $r$-layer hexagonal carbon nanosheet (see Fig.~\ref{fig:F123LayHexCarbon}), then $H$ is an {\em $r$-layer hexagonal silicate sheet}.
\end{defn}
\begin{figure}[ht!]
	\begin{center}
		\scalebox{0.6}{\includegraphics{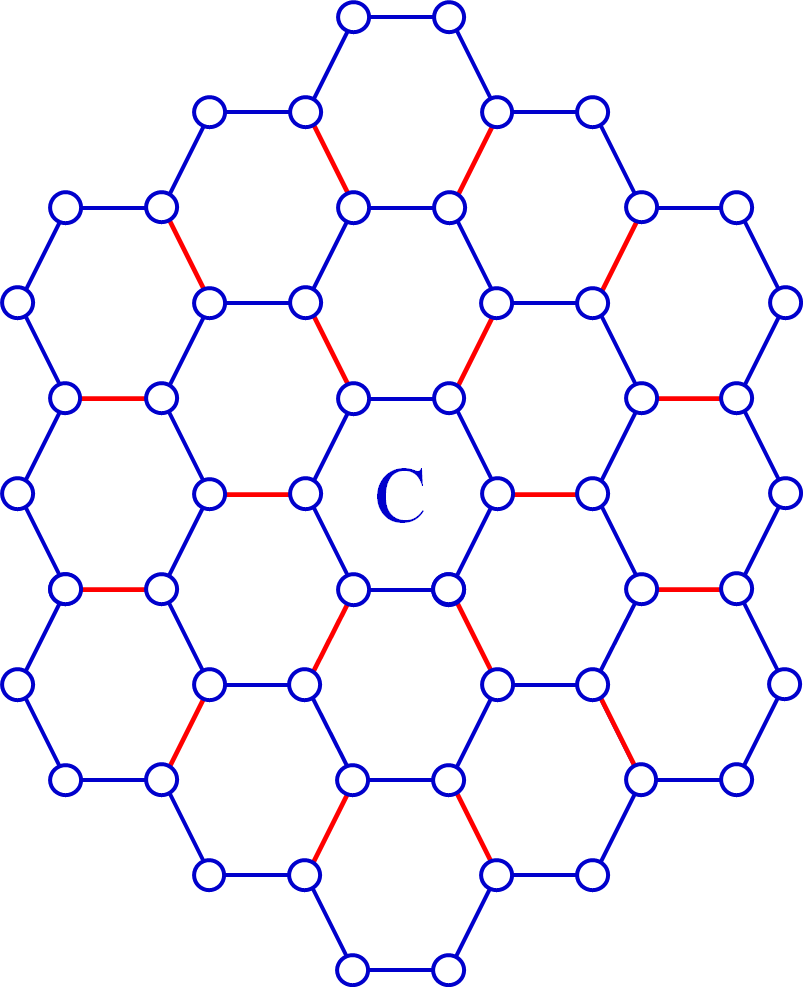}}
	\end{center}
	\caption{The $3$-layer hexagonal carbon nanosheet $G$ with $C$ as the central hexagon of $G$. The edges of the matching $M$ are red.}
	\label{fig:F1HatHatSil}
\end{figure}

\begin{thm}
	\label{LgtGgtSG}
	If $H$ is an $r$-layer hexagonal silicate sheet, then $\gpack(H) = \gt(H)$.
\end{thm}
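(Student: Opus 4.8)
The plan is to realize an $r$-layer hexagonal silicate sheet $H$ as a graph of the form $G_M$ studied in Proposition~\ref{prp:gpackGM}, and then simply invoke that proposition. The underlying graph $G$ will be the $r$-layer hexagonal carbon nanosheet obtained from $\SM(H)$ by deleting its pendant vertices, as in the definition. First I would make the correspondence between the silicate structure and the $G_M$ construction precise: the silicon atoms of $H$ are exactly the vertices that, after smoothing the degree-$2$ oxygen atoms, become the vertices of the carbon nanosheet $G$ together with the attached leaves; the oxygen atoms that are shared between two tetrahedra correspond (after smoothing) to edges of $G$, while the non-shared (``dangling'') oxygen atoms become the attached leaves. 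Thus each silicon atom of $G$ that participates in a shared-oxygen bond becomes incident to an edge of the matching $M$, and the structure of the tetrahedra forces the resulting leaf-attachment pattern to coincide with the $G_M$ operation for a suitable matching $M$ on $G$.

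The key technical step is to identify the correct matching $M$. Each $SiO_4$ tetrahedron contributes one silicon atom with four oxygen neighbors; after smoothing, each shared oxygen becomes an edge joining two silicon atoms, and the unshared oxygens become pendant leaves. The boundary silicon atoms of the nanosheet are linked to fewer neighbors and therefore carry more dangling oxygens (hence two leaves), while interior silicon atoms that are fully linked carry exactly one leaf. I would choose $M$ to be a matching in $G$ (the carbon nanosheet) whose edges correspond to shared-oxygen bonds in such a way that the degree-$2$-after-smoothing bookkeeping matches the $G_M$ prescription exactly: vertices in $V(M)$ receive one leaf and vertices of $N = V(G)\setminus V(M)$ receive two leaves. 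The natural candidate, suggested by Fig.~\ref{fig:F1HatHatSil}, is a matching saturating the interior while leaving the appropriate boundary silicons unmatched; verifying that such an $M$ exists and that $H \cong G_M$ for this $M$ is the first half of the argument.

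Having established $H \cong G_M$, Proposition~\ref{prp:gpackGM} immediately gives $\gpack(G_M)=|V(G)|-|M|$, and the equality $\gpack(H)=\gt(H)$ will follow provided the \emph{second} hypothesis of that proposition holds, namely the existence of a set $M'\subset V(M)$ meeting each edge of $M$ in exactly one vertex and forming an independent set in $G$. This is where the real work lies. The hexagonal carbon nanosheet $G$ is bipartite (it is a subgraph of the hexagonal lattice, with the natural two-colouring by the B- and N-terminated sublattices noted in the text), so I would take $M'$ to be the set of endpoints of $M$ lying in one colour class. Since $M$ is a matching, each of its edges has exactly one endpoint in each colour class, so $M'$ automatically meets every edge of $M$ in exactly one vertex; and since $M'$ lies entirely in one colour class of a bipartite graph, it is independent. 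This verifies the hypothesis, and Proposition~\ref{prp:gpackGM} then yields $\gt(G_M)=|V(G)|-|M|=\gpack(G_M)$, completing the proof.

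The main obstacle I anticipate is not the final invocation of Proposition~\ref{prp:gpackGM} — once $H$ is written as $G_M$ with $G$ bipartite, the bipartiteness delivers $M'$ almost for free — but rather the careful verification that the smoothing-and-leaf-attachment combinatorics of the silicate sheet genuinely produce the $G_M$ pattern for a single well-defined matching $M$. In particular one must confirm that every silicon atom receives exactly the number of leaves prescribed by whether it is saturated by $M$ (one leaf) or not ($N$, two leaves), with no silicon atom receiving the ``wrong'' count; this requires a uniform description of the oxygen-sharing pattern across interior, edge, and corner tetrahedra of the $r$-layer sheet, and is best handled by appealing to the explicit construction in Fig.~\ref{fig:F1HatHatSil} and the definition of the hexagonal silicate sheet rather than by a case analysis on $r$.
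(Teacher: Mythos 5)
Your proposal follows essentially the same route as the paper: realize the silicate sheet, after smoothing, as the graph $G_M$ of Proposition~\ref{prp:gpackGM} with $G$ the $r$-layer hexagonal carbon nanosheet, and then verify the hypothesis on $M'$. One point where you genuinely diverge is the treatment of $M'$, and your version is cleaner: the paper takes the explicit matching $M=\{xy\in E(G):\ d(x,C)=2k \mbox{ and } d(y,C)=2k+1\}$ (with $C$ the central hexagon) and sets $M'=\{x\in V(M):\ d(x,C) \mbox{ odd}\}$, asserting its independence directly, whereas you observe that $G$ is bipartite and take for $M'$ the $M$-endpoints in one colour class, which makes both required properties of $M'$ automatic for \emph{any} matching in $G$. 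What this buys is robustness (no independence check tied to the particular $M$); what it does not remove is the need to exhibit a matching $M$ with $V(M)$ equal to exactly the silicons carrying one dangling oxygen, i.e.\ a perfect matching on the degree-$3$ vertices of $G$. The paper's distance-parity matching is precisely the explicit construction you defer to Fig.~\ref{fig:F1HatHatSil}; you flag its verification as the main burden but leave it unfilled, which is roughly the same level of detail as the paper's bare assertion that $G_M$ and $\SM(H)$ are isomorphic.

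There is, however, one genuine (if small) imprecision: you write ``having established $H\cong G_M$,'' but $H$ is \emph{not} isomorphic to $G_M$ --- in $H$ every silicon--silicon link passes through a shared oxygen, a degree-$2$ vertex, so it is $\SM(H)$, not $H$, that is isomorphic to $G_M$. Your argument as written therefore establishes $\gpack(\SM(H))=\gt(\SM(H))$ and still needs the transfer step $\gt(H)=\gt(\SM(H))$ and $\gpack(H)=\gpack(\SM(H))$, i.e.\ invariance of both invariants under smoothing the degree-$2$ oxygen vertices (this is not automatic for smoothing in general graphs, since smoothing can change the geodesic structure). The paper states this transfer explicitly as the final step of its proof; your writeup omits it, although your opening paragraph shows you are aware the identification goes through the smoothing operation. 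With that bridging step stated, your argument matches the paper's.
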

\begin{proof}
	Let $G$ be the $r$-layer hexagonal carbon nanosheet, where $C$ is the central hexagon of $G$ (see Fig.~\ref{fig:F123LayHexCarbon}).
	Let us consider the following matching in $G$: $M$ = \{$xy\in E(G) : d(x, C) = 2k$ and $d(y, C)$ = $2k+1$, where $k\in\{0, 1,\dots, r-1\}$\} (see Fig.~\ref{fig:F1HatHatSil}). Given a graph $G$ and a matching $M$, the respective graph $G_M$ is defined in Section \ref{sec:geo-pack-leaf-attach}. Note that $G_M$ and $\SM(H)$ are isomorphic graphs. 
	In addition, the set $M'=\{x\in V(M):\, d(x,C)\textrm{ is odd}\}$ is independent, hence enjoying the conditions in Proposition~\ref{prp:gpackGM}.
	Thus, $\gpack(G_M)$ = $\gpack(\SM(H))$ = $\gt(G_M)$ = $\gt(\SM(H))$. Since $\gt(\SM(H))$ = $\gt(H)$ and $\gpack(\SM(H))$ = $\gpack(H)$, we conclude that $\gpack(H)$ = $\gt(H)$.
\end{proof}

Let $G$ be an $r$-layer hexagonal carbon nanosheet, and $M$ the matching as defined above. Then $|M|= 6(1 +  \ldots + r-1) = 3(r^2-r)$.  In addition, $|V(H)| = 6r^2$, cf. also~\rm \cite{ChSh90}. Now, it is straightforward to see: $$\gt(H) = |V(H)| - |M| = 6r^2 - 3(r^2-r) = 3(r^2+r).$$ 
\section{Conclusion}
In this paper, we solved the geodesic transversal problem for the well-known parallel architecture mesh of trees and some important chemical structures such as silicate networks and carbon nanosheets. We demonstrated some of the challenges of this combinatorial problem. In addition, see Fig.~\ref{FGrid_SemiDiagDrid_DiagGrid}, which presents three basic types of grids (of size $7\times 5$) in which the problem behaves very differently.  The geodesic transversal problem is trivial for the $2$-dimensional square grids, where the geodesic transversal number equals $2$ (see Proposition~\ref{Tgt2DGrid}). But the geodesic transversal problems for semi-diagonal grids and diagonal grids seem to be more difficult and remain open. We believe that $\gt(G) = r+s-1$ where $G$ is an $r\times s$-dimensional semi-diagonal or diagonal grid. 
\begin{figure}[ht!]
	\begin{center}
		\scalebox{0.5}{\includegraphics{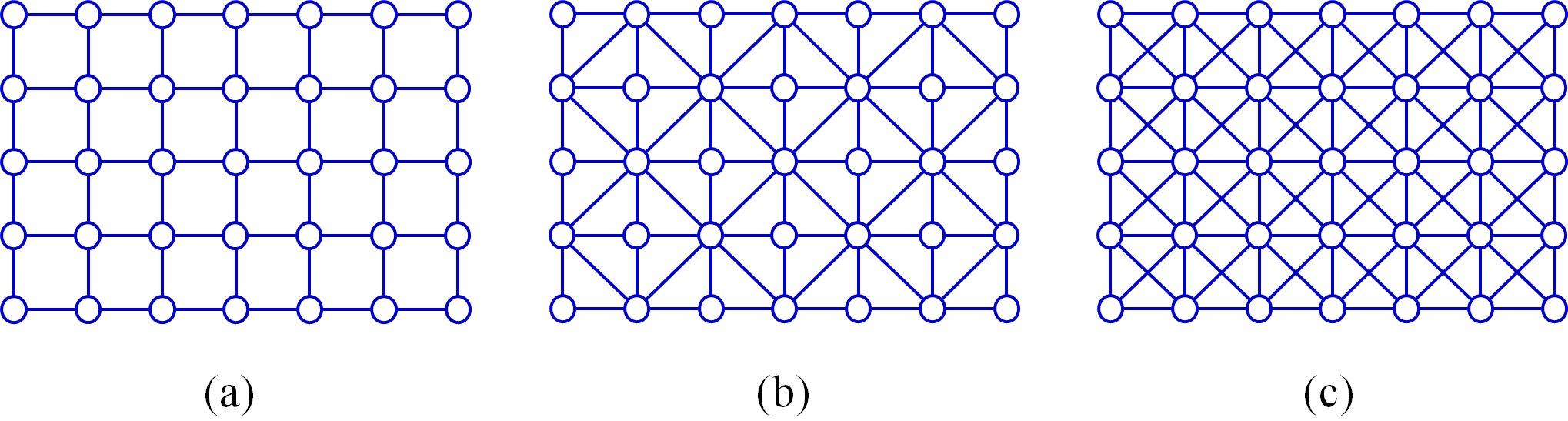}}
	\end{center}
	\caption{(a) Square grid. \quad (b) Semi-diagonal grid. \quad (c) Diagonal grid.}
	\label{FGrid_SemiDiagDrid_DiagGrid}
\end{figure}
While we believe these two cases are solvable, the geodesic transversal problem in more complex product graphs and many other relevant classes of graphs offer further challenging problems. 

In addition, the complexity status of this problem is unknown for intersection graphs such as chordal graphs, circular-arc graphs, permutation graphs etc and Cayley graphs such as wrapped butterfly,  circulant graphs, shuffle-exchange etc.
\section*{Acknowledgments}

This work was supported and funded by Kuwait University, Research Grant No.\ (QI 01/20).



\begin{thebibliography}{99}
\bibitem{BoDv07} 
P.~Boothe, Z.~Dvo\v r\'ak, A.~Farley, A.~Proskurowski, 
Graph covering via shortest paths, 
Congressus Numerantium 87 (2007) 145--155.

\bibitem{cavaleri-2022}
M.~Cavaleri,  D.~D’Angeli, A.~Donno, S.~Hammer,
Wiener, edge-Wiener, and vertex-edge-Wiener index of Basilica graphs,
Discrete Appl.\ Math.\  307 (2022) 32--49.

\bibitem{chen-2022}
H.~Chen,L.~Chao,
The expected values of Wiener indices in random polycyclic chains,
Discrete Appl.\ Math.\ 315 (2022) 104--109.

\bibitem{CiBa11} 
E.~Chigo, A.~Bautista-Hern\'{a}ndez, M.~Castro, G.H.~Cocoletzi, Investigating the electronic properties of silicon nanosheets by first-principles calculations, Journal of Molecular Modeling 18 (2011) 2147--2152.

\bibitem{ChSh90} 
M.-S.~Chen, K.G.~Shin, D.~Kandlur, Addressing, routing, and broadcasting in hexagonal mesh multiprocessors, IEEE Transactions on Computers 39 (1990) 10--18.

\bibitem{DeCh14} 
J.-P.~Deng, W.-H~Chen, S.P.~Chiu, C.-H.~Lin, B.-C.~Wang, Edge-termination and core-modification effects of hexagonal nanosheet graphene, Molecules 19 (2014)  2361--2373.

\bibitem{HIK-11} R.~Hammack, W.~Imrich, S.~Klav\v{z}ar, Handbook of product graphs, 2nd ed., Boca Raton (FL), CRC Press, 2011.

\bibitem{HoMa19} 
A.E.~Holroyd, I.~Marcovici, J.B.~Martin,
Percolation games, probabilistic cellular automata, and the hard-core model,
Probability Theory and Related Fields 174(3) (2019)  1187--1217.

\bibitem{KaKa06} 
V.~Kahlenberg, R.~Kaindl, D.M.~Többens, The crystal structure of the interrupted framework silicate K9.6Ca1.2Si12O30 determined from laboratory X-ray diffraction data, Journal of Solid State Chemistry 179(7) (2006)  1948--1956.

\bibitem{Leig91} 
F.T.~Leighton, {\em Introduction to parallel algorithms and architectures: arrays, trees, hypercubes}, Morgan Kaufmann, San Mateo (Ca), 1991.

\bibitem{MaBr21} 
P.~Manuel, B.~Bre\v{s}ar, S.~Klav\v{z}ar, 
The geodesic-transversal problem, 
Applied Mathematics and Computation 413 (2022) 126621. 

\bibitem{MaRa03} 
P.~Manuel, B.~Rajan, I.~Rajasingh, A.~Alaguvel, Tree Spanners, Cayley Graphs and Diametrically Uniform Graphs, Lecture Notes in Computer Science 2280 (2003) 333--345.

\bibitem{PeSe21a} 
I.~Peterin, G.~Semani\v{s}in, On maximal shortest paths cover, Mathematics 9 (2021) 1592.

\bibitem{PeSe21b} 
I.~Peterin, G.~Semani\v{s}in, On maximal shortest paths cover number of join and lexicographic product of graphs, manuscript, September 17, 2020.

\bibitem{ShTa20} 
Z.-Y.~Shi, H.~Tang, Z.~Feng, Y.~Wang, Z.-M.~Li, J.~Gao, Y.-J.~Chang, T.-Y.~Wang, J.-P.~Dou, Z.-Y.~Zhang, Z.-Q.~Jiao, W.-H.~Zhou, X.-M.~Jin, Quantum fast hitting on glued trees mapped on a photonic chip, Optica 7 (2020) 613--618.

\bibitem{spiro-2022}
S.~Spiro,
The Wiener index of signed graphs,
Appl.\ Math.\ Comput.\  416 (2022) 126755.


\bibitem{TeSa16} 
S.~Tesson, M.~Salanne, B.~Rotenberg, S.~Tazi, V.~Marry, A Classical Polarizable Force Field for Clays: Pyrophyllite and Talc. Journal of Physical Chemistry C, American Chemical Society  120(7) (2016) 3749--3758.

\bibitem{west-2001}
D.B.~West,
{\em Introduction to graph theory},
Prentice Hall, Upper Saddle River (N.J), 2001.

\bibitem{XiZh17}
F.~Xiaoxiao, R.-Q.~Zhang, Energetics of hexagonal boron nitride nanostructures: edge dependence and truncation effects, Nanoscale 9 (2017) 6734--6740.

\bibitem{Xu02} 
J.~Xu, {\em Topological Structure and Analysis of Interconnection Networks}, Springer, 2001.

\bibitem{XuLi17} 
J.~Xu, D.~Li, Y.~Chen, L.~Tan, B.~Kou, F.~Wan, W.~Jiang, F.~Li, Carbon Nitride/Reduced Graphene Oxide/Layered MnO2 Ternary Nanocomposite with Outstanding  Catalytic Properties on Thermal Decomposition of Ammonium Perchlorate, Nanomaterials 7 (2017) 450.

\end{thebibliography}
\end{document}